\documentclass[12pt]{amsart}
\usepackage[utf8]{inputenc}
\usepackage{amsfonts}
\usepackage{amsmath}
\usepackage{amssymb}
\usepackage{amsthm}
\usepackage{xcolor}
\usepackage[margin=1.3in]{geometry}
\usepackage{graphicx}
\usepackage[11pt]{moresize}
\usepackage{tikz}
\tikzstyle{vertex}=[circle,draw=black,fill=black,inner sep=0,minimum size=3pt,text=white,font=\footnotesize]
\usepackage{nccmath}

\newtheorem{theorem}{Theorem}
\newtheorem*{conjecture*}{Conjecture}
\newtheorem{proposition}{Proposition}[section]
\newtheorem{lemma}[proposition]{Lemma}

\newtheorem{remark}[proposition]{Remark}

\newtheorem*{question}{Question}
\newtheorem*{lemma-repeat}{Lemma \ref{5/12lemma}}

\theoremstyle{definition}
\newtheorem{definition}[proposition]{Definition}

\newcommand{\tb}{\textbf}

\newcommand{\vs}{\vspace{3mm}}
\newcommand{\vsss}{\vspace{6mm}}

\newcommand{\hs}{\hspace{1mm}}

\newcommand{\R}{\mathbb{R}}

\newcommand{\N}{\mathbb{N}}

\newcommand{\mc}{\mathcal}

\newcommand{\ep}{\epsilon}
\newcommand{\lam}{\lambda}
\newcommand{\sub}{\subseteq}

\DeclareMathOperator{\PPC}{PPC}

\DeclareMathOperator{\summ}{sum}
\DeclareMathOperator{\LHS}{LHS}
\DeclareMathOperator{\RHS}{RHS}
\DeclareMathOperator{\sinc}{sinc}

\raggedbottom

\title{On the Smallest Gap in a Sequence with Poisson Pair Correlations}
\author{Daniel Altman \and Zachary Chase}
\thanks{The second author is partially supported by Ben Green's Simons Investigator Grant 376201 and gratefully acknowledges the support of the Simons Foundation.}
\address{Mathematical Institute, Andrew Wiles Building, Radcliffe Observatory Quarter, Woodstock Road, Oxford OX2 6GG, UK}
\date{September 18, 2020}

\pagestyle{plain}

\begin{document}

\begin{abstract}
We prove that any increasing sequence of real numbers with average gap $1$ and Poisson pair correlations has some gap that is at least $3/2+10^{-9}$. This improves upon a result of Aistleitner, Blomer, and Radziwi\l\l. 
\end{abstract}

\maketitle

\section{Introduction}\label{introduction}

Let $\lam = (\lam_n)_{n=1}^\infty$ be an increasing sequence of real numbers. Often, for number theoretic sequences $\lam$, the average gap $\lam_{n+1}-\lam_n$ is well-understood, while little is known about the distribution function of the gaps. Sometimes, however, statistical information about the collection of gaps $\lam_{n+k}-\lam_n$ is of importance.    

\vs

For example, letting $(\gamma_n)_{n=1}^\infty$ denote the imaginary parts of the zeroes of the Riemann zeta function in the critical strip in increasing order, we know $$\#\{\gamma_n \le T\} \sim \frac{T\log T}{2\pi}$$ as $T \to \infty$, and Montgomery's pair-correlation conjecture predicts that $$\frac{2\pi}{T\log T}\#\Big\{(n,m) : \gamma_n,\gamma_m \le T \text{ , } \frac{2\pi a}{\log T} \le \gamma_m-\gamma_n \le \frac{2\pi b}{\log T}\Big\} \to \int_a^b \left(1-\sinc^2(\pi t)\right)dt \vspace{2mm}$$ as $T \to \infty$, for any fixed $0 < a < b$, where $\sinc(x):= \frac{\sin x}{x}$. 

\vs

Henceforth, let $\lam = (\lam_n)_{n=1}^\infty$ denote an increasing sequence with average gap $1$: $$\frac{1}{N}\sum_{n \le N} (\lam_{n+1}-\lam_n) \to 1. \vspace{-1mm}$$ With this normalization, we may define the \textit{pair correlation function} $R_\lam$ by $$R_\lam(I,N) := \frac{1}{N}\#\Big\{(i,j) : 1 \le i \not = j \le N : \lambda_j-\lambda_i \in I\Big\},$$ where $I \sub \R$ is a bounded interval and $N$ a positive integer. 

\vs

For example, up to some normalization technicalities, Montgomery's pair-correlation conjecture asserts that $R_{(\gamma_n)_n}$ converges (in distribution) to a distribution with cumulative distribution function $1-\sinc^2(\pi t)$.   

\vs

Motivated by the fact that the pair correlation function of a random sequence generated by a Poisson point process converges in distribution to the uniform distribution, an increasing sequence of real numbers $(\lam_n)_{n=1}^\infty$ with average gap $1$ is said to have \textit{Poisson pair correlations} (PPC) if $R_{(\lam_n)_n}$ converges to the uniform distribution: \begin{equation}\label{ppcdefn}\lim_{N \to \infty} \frac{1}{N}\Big|\Big\{(i,j) : 1 \le i \not = j \le N : \lambda_j-\lambda_i \in I\Big\}\Big| = |I|\end{equation} for all intervals $I \subseteq \R$, where $|\cdot|$ denotes the Lebesgue measure. 

\vs

Despite Montgomery's pair-correlation conjecture concerning increasing sequences of real numbers, most research on properties of general sequences with Poisson pair correlation concerns sequences in the torus (see, for example, \cite{all}, \cite{alp}, \cite{heath-brown}, \cite{rudnicksarnak}, \cite{rz1999}, \cite{rz2002}, \cite{steinerberger}, \cite{walker}), with little investigated about sequences of real numbers.



\vs

Some specific number-theoretic sequences of real numbers have been shown to have PPC. Sarnak \cite{sarnak} showed that almost every positive definite binary quadratic form (in a suitable sense) gives rise to a sequence with PPC, by ordering the values it takes on (pairs of) positive integers and appropriately normalizing. Concretely, a consequence of the work of Eskin, Margulis, and Mozes \cite{emm} is that the ordered sequence of values of $x^2+\sqrt{2}y^2$ for $x,y \in \N$ has PPC. 

\vs

Aistleitner, Blomer, and Radziwi\l\l \hspace{0.5mm} \cite{abr} studied the related triple correlation function of certain number-theoretic sequences, while also initiating a study of general sequences of real numbers with Poisson pair (and triple) correlations. They asked the following.

\vspace{1.5mm}

\begin{question}
Let $\lam_1 < \lam_2 < \dots$ be an increasing sequence of real numbers with average gap $1$ and with Poisson pair correlations. How small can $\limsup_{n \to \infty} \lam_{n+1}-\lam_n$ be?
\end{question}

\vspace{1.5mm}

Among increasing sequences of real numbers with average gap $1$ and PPC, Aistleitner et al. exhibited one with maximum gap $2$, and proved that any such sequence must have a gap of size at least $3/2-\epsilon$, for any $\epsilon > 0$. They asked in their paper \cite{abr} as well as at Oberwolfach 2019 \cite{green} to improve either bound. Our main theorem is an improved lower bound. 

\vspace{2mm}

\begin{theorem}\label{main}
Let $\lam_1 < \lam_2 < \dots$ be an increasing sequence of real numbers with average gap $1$ and Poisson pair correlations. Then $\limsup_{n \to \infty} \lam_{n+1}-\lam_n > \frac{3}{2}+10^{-9}$. 
\end{theorem}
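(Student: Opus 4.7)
The plan is to proceed by contradiction. Suppose $\limsup_n (\lambda_{n+1} - \lambda_n) \le M$ where $M := \tfrac{3}{2} + 10^{-9}$. Passing to a tail we may assume $d_n := \lambda_{n+1} - \lambda_n \le M$ for every $n$, and by extracting a subsequence along which the empirical distributions of the $k$-step gaps $\lambda_{n+k} - \lambda_n$ converge vaguely, we obtain limit measures $\nu_k$ on $(0, \infty)$. The PPC hypothesis translates to the identity
\[
\sum_{k=1}^\infty \nu_k(I) = |I|
\]
for every bounded interval $I \subset (0, \infty)$, together with $\supp \nu_k \subset [0, kM]$ and $\int t\, d\nu_k = k$ (from the average-gap-$1$ hypothesis). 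In particular $\nu_1 \le \sum_k \nu_k = $ Lebesgue on $(0, \infty)$, so $\nu_1$ has a density $f$ with $0 \le f \le 1$ almost everywhere on $[0, M]$.

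The classical $\tfrac{3}{2}$ barrier (essentially the Aistleitner--Blomer--Radziwi\l\l\ bound) is a short variational computation. Writing $\eta := \int_0^{M-1} f$ for the $\nu_1$-mass of ``small'' gaps (those below $M - 1 \approx \tfrac{1}{2}$) and maximizing $\int tf(t)\, dt$ subject to $\int f = 1$, $f \le 1$, $\supp f \subset [0, M]$, and $\int_0^{M-1} f = \eta$ --- the extremum being $f = \mathbb{1}_{[M-1-\eta,\, M-1]\,\cup\,[M-1+\eta,\,M]}$ --- one finds $\int tf \le M - \tfrac{1}{2} - \eta^2$. The hypothesis $\int tf = 1$ then forces $\eta \le \sqrt{M - 3/2} \le 10^{-4.5}$, and in particular $M \ge \tfrac{3}{2}$.

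To push past $M = \tfrac{3}{2}$ we exploit the PPC identity at the scale $I = [0, M-1]$, which rearranges as
\[
\sum_{k \ge 2} \nu_k\bigl([0, M-1]\bigr) \;=\; (M-1) - \nu_1([0, M-1]) \;=\; M - 1 - \eta \;\approx\; \tfrac{1}{2}.
\]
The quantity on the left counts the asymptotic fraction of positions $n$ at which $k$ consecutive gaps $d_n, \ldots, d_{n+k-1}$ (for some $k \ge 2$) all lie in $[0, M-1]$ \emph{and} sum to at most $M-1$. This is very restrictive: only an $\eta$-fraction of gaps are small to begin with, so combining the trivial bound $\nu_k([0, M-1]) \le \eta$ with the budget identity above shows that essentially all of the mass $\tfrac{1}{2}$ on the left must be produced by $k$'s in the range $2 \le k \lesssim 1/(2\eta)$, i.e.\ by extremely long runs of small gaps.

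The final step is to rule out such long runs quantitatively. The variational bound above already constrains \emph{where} the $\eta$-mass of $f$ may sit: if it is placed near $M-1$ then the sum of any $k \ge 2$ small gaps exceeds $k(M-1-\eta) > M-1$ and the previous display is immediately contradicted; if instead the mass is placed near $0$, long small-runs with sum $\le M-1$ become possible, but then $\int tf$ is forced downward and the variational bound on $\eta$ tightens. Layering in a \emph{second} PPC constraint --- for instance at an interval such as $[M-1,\,2(M-1)]$ --- one obtains an upper bound on the run-length statistics of small gaps, and balancing this against the budget identity produces a contradiction whenever $M - \tfrac{3}{2}$ is below an absolute constant; an unoptimized numerical choice gives $10^{-9}$. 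The main obstacle is this last step: without any mixing or independence hypothesis on consecutive gaps, bounding typical small-gap run lengths must be done directly from the PPC identity at multiple scales simultaneously, and the associated multi-scale bookkeeping is the technical heart of the argument.
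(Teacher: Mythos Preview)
Your setup and the recovery of the $3/2$ bound via the variational computation are correct and clean, and you have correctly located where the improvement must come from: the $\approx\tfrac12$ of PPC mass on $[0,M-1]$ is forced to come from long runs of small gaps, and this must be shown incompatible with PPC at other scales. But the argument stops precisely at the point where the real work begins. Your ``final step'' is not carried out, and the heuristic you offer for it does not survive scrutiny: the dichotomy about where the $\eta$-mass of $f$ sits is a statement about the \emph{marginal} distribution $\nu_1$, and tells you nothing about the \emph{joint} distribution of consecutive gaps. A sequence can have almost all of its small gaps near $M-1$ in distribution while simultaneously containing, on a $0$-density set, arbitrarily long runs of gaps of size $o(1)$ that carry the entire PPC$(0,M-1)$ budget. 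No amount of layering further scalar constraints on $f$ will detect this, because $f$ simply does not see runs. You yourself flag this at the end, but flagging it is not resolving it.

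What the paper actually does for this step is quite different from anything you sketched. The key is a concrete combinatorial inequality (their Proposition~3.4, resting on a four-variable quadratic lemma): any block of $L$ gaps with total sum at most $\tfrac12$ satisfies
\[
\PPC^{\text{block}}(0,\tfrac18)+\PPC^{\text{block}}(0,\tfrac14)\;\ge\;\tfrac{5}{6}\binom{L+1}{2}-\tfrac{5}{6}L,
\]
whereas PPC would demand only $\tfrac38$ times $\binom{L+1}{2}$ asymptotically --- a fixed gain of $\tfrac{1}{24}$ in the leading constant. This is the ``bias towards $0$'' that contradicts PPC at the pair of scales $\tfrac18,\tfrac14$. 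The second non-trivial ingredient, occupying the paper's Section~4, is that the maximal runs of small gaps need not themselves have sum $\le\tfrac12$, so one must further partition each such run into pieces with sum $\le\tfrac12$ while controlling cross-piece PPC contributions; this requires a carefully chosen greedy partition together with Propositions~\ref{smallPPCmatleast2} and \ref{smallbad1interval}--\ref{smallbad2interval} to show the cross terms are $O(\epsilon^{1/4}N)$. Neither of these two ingredients is present, even in outline, in your proposal.
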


\vspace{1.5mm}

We leave open the question of how small the largest gap can be; in light of Theorem \ref{main}, it lies between $\frac{3}{2}+10^{-9}$ and $2$, inclusive.

\vs

\section{Motivation and Proof Sketch of Theorem \ref{main}}\label{motivation}

In this section we motivate the proof of Theorem \ref{main}, overlooking some technical complications and emphasizing the main ideas. 

\vs

Let us paraphrase the proof sketch given in \cite{abr} that any strictly increasing sequence of real numbers with mean gap $1$ and PPC has a gap at least $\frac{3}{2}-\ep$, for any $\ep > 0$. 

\vs

Indeed, for any sequence $(\lam_n)_n$ that has PPC, the distribution function of the gaps $\lam_{n+1}-\lam_n$ can grow at most linearly. However, if $\lam_{n+1}-\lam_n \le 3/2-\ep$ for each $n$, then the average spacing being $1$ is incompatible with the distribution function growing at most linearly between $1/2-\epsilon$ and $3/2-\epsilon$. 

\vs

This argument also implies that if the maximum gap is $3/2$, then with $F$ denoting the distribution function of the gaps $\lam_{n+1}-\lam_n$, we have \begin{equation}\label{F=0} F(x) = 0 \text{ for } x \le \frac{1}{2}\end{equation} \begin{equation}\label{F=x-1/2} F(x) = x-\frac{1}{2} \text{ for } x \in \big[\frac{1}{2},\frac{3}{2}\big].\end{equation}

\vs

We now mention the consequences of \eqref{F=0} and \eqref{F=x-1/2}. The latter means that $\PPC(\frac{1}{2},\frac{3}{2})$ (i.e., \eqref{ppcdefn} for $I = [\frac{1}{2},\frac{3}{2}]$) is already satisfied by the single gaps $\lam_{n+1}-\lam_n$, so there cannot be a nontrivial contribution coming from larger gaps $\lam_{n+m}-\lam_n$, $m \ge 2$. The former, \eqref{F=0}, on the other hand, means that $\PPC(0,\frac{1}{2})$ must come entirely from a $0$-density part of the sequence, and more specifically only from blocks $[n_1,n_2]$ contained in that $0$-density part. For the union of such blocks to nontrivially contribute to the $\PPC$ count, the blocks must grow in length. 

\vs

The natural question then is whether such blocks can satisfy the $\PPC$ condition on all subintervals of $[0,\frac{1}{2}]$. We show that the answer is no. The key is to establish a ``bias near $0$" of the $\PPC$ count on long blocks whose total gap is at most $1/2$. A bit more precisely, if $\lam_1 < \dots < \lam_k$ have $\lam_k -\lam_1 \le 1/2$, then $\frac{1}{|J|}\sum_{1 \le i < j \le k} 1_{\lam_j-\lam_i \in J}$ is larger for intervals $J \sub [0,1/2]$ concentrated near $0$, with the bias becoming more pronounced as $k \to \infty$. A difficulty we encounter in the proof, though, is that the blocks forming the relevant $0$-density part of the sequence need not have total gap at most $1/2$, so we need to use a further decomposition of the $0$-density part of the sequence. We end up using a suitable greedy algorithm to decompose; this is implemented in Section \ref{partitionsection}.

\vs

\section{Proof of Theorem \ref{main}}\label{proofofmaintheorem}

Let $\ep = 10^{-9}$. For this section, we fix a supposed increasing sequence of real numbers $\lam_1 < \lam_2 < \dots$ with average gap $1$ and $\PPC$, that has $\lam_{n+1}-\lam_n \le 3/2+\ep$ for sufficiently large $n$. By truncating the sequence, we may assume that $$g_n := \lambda_{n+1}-\lam_n$$ satisfies $g_n \le 3/2+\ep$ for each $n \ge 1$. As in Section \ref{motivation}, we write $\PPC(a,b)$ to denote equation \eqref{ppcdefn} for $I = [a,b]$. We note that a sequence satisfying the PPC condition for all such $I$ necessarily satisfies the same condition for all open or indeed half-open intervals. We may therefore also use $\PPC(a,b)$ to refer to equation \eqref{ppcdefn} for the half-open interval $[a,b)$, for example.

\vs

Recall that the lower bound of $\frac{3}{2}$ was established in \cite{abr}, whose proof we sketched in Section \ref{motivation}. We start the proof of Theorem \ref{main} by making these arguments quantitative.

\vs

We begin with equation \eqref{F=0}.

\vspace{1.5mm}

\begin{proposition}\label{smalldensity}
For all $N$ sufficiently large, we have $$\frac{1}{N}\#\{n \le N : g_n \le \frac{1}{2}\} \le 2\sqrt{\epsilon}.$$
\end{proposition}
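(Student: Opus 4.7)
The plan is to quantify the sketch from Section~\ref{motivation} by working with the empirical CDF of consecutive gaps. I define
$$F_N(x) := \frac{1}{N}\#\{n \le N : g_n \le x\}, \qquad \alpha_N := F_N(\tfrac{1}{2}).$$
Two inputs are immediate: since $g_n \le \frac{3}{2}+\ep$ for all $n$, we have $F_N(\frac{3}{2}+\ep) = 1$; and since the mean gap equals $1$,
$$\int_0^{3/2+\ep}\! F_N(x)\, dx \;=\; \frac{3}{2} + \ep - \frac{1}{N}\sum_{n\le N} g_n \;\longrightarrow\; \frac{1}{2} + \ep.$$
The decisive third input is a ``Lipschitz'' property from PPC: for each fixed $0 \le y < x$, since the consecutive pairs $(n,n+1)$ contribute to the full pair count on $(y,x]$, PPC on $(y,x]$ yields
$$F_N(x) - F_N(y) \;\le\; \frac{1}{N}\#\{(i,j) : i \ne j,\ \lam_j - \lam_i \in (y,x]\} \;=\; (x-y) + o_{y,x}(1).$$

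Setting $\alpha^* := \limsup_N \alpha_N$, I would pass to a subsequence with $\alpha_{N_k} \to \alpha^*$, apply Helly's selection theorem to extract a further subsequence along which $F_{N_k}$ converges pointwise on a dense set, and define the limit $F$ by right-continuity. The per-interval PPC bound transfers to $F$ and shows that $F$ is non-decreasing and $1$-Lipschitz on $[0, \frac{3}{2}+\ep]$; in particular $F$ is continuous, so $F_{N_k} \to F$ everywhere. Hence $F(\frac{1}{2}) = \alpha^*$, $F(\frac{3}{2}+\ep) = 1$, and bounded convergence gives $\int F = \frac{1}{2} + \ep$.

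The heart of the argument is now elementary. Any non-decreasing, $1$-Lipschitz $F$ on $[0,\frac{3}{2}+\ep]$ with $F(\frac{1}{2})=\alpha^*$ and $F(\frac{3}{2}+\ep)=1$ satisfies the pointwise lower bound
$$F(x) \;\ge\; \max\!\bigl(\,0,\ \alpha^* - \tfrac{1}{2} + x,\ x - \tfrac{1}{2} - \ep\,\bigr),$$
and splitting the integration at $x = \frac{1}{2}-\alpha^*$, $\frac{1}{2}$, and $\frac{1}{2}+\alpha^*+\ep$ gives
$$\int_0^{3/2+\ep}\! F(x)\,dx \;\ge\; \frac{1}{2} + \alpha^{*2} + \alpha^*\ep.$$
Comparing with $\int F = \frac{1}{2}+\ep$ then yields $\alpha^{*2} + \alpha^*\ep \le \ep$, whence $\alpha^* \le \sqrt\ep < 2\sqrt\ep$, which delivers the claim for $N$ sufficiently large.

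The main obstacle I expect is the bookkeeping in the limiting step: the error in the PPC-derived approximate $1$-Lipschitz bound is per-interval rather than uniform in the endpoints, so I have to take some care (via Helly and the continuity of the limit) to transfer it cleanly to $F$. Once that is in place, bounded convergence and the piecewise-linear integration are routine.
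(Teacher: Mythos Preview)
Your argument is correct and takes a genuinely different route from the paper's. The paper works directly with $F_N$: it writes $\frac{1}{N}\sum g_n$ as $\int_0^{3/2+\ep}(1-F_N)$, splits the integral at $\frac{1}{2}+\sqrt\ep$, uses $\PPC(x,\frac{3}{2}+\ep)$ together with dominated convergence to bound the upper piece by $\int_{1/2+\sqrt\ep}^{3/2+\ep}(\frac{3}{2}+\ep-x)\,dx$, and then extracts the bound on $F_N(\frac{1}{2})$ from the crude inequality $F_N(x)\ge F_N(\frac{1}{2})$ on $[\frac{1}{2},\frac{1}{2}+\sqrt\ep]$. You instead pass to a subsequential limit $F$ via Helly, so that the per-interval approximate $1$-Lipschitz bound becomes an exact global one, and then solve the resulting elementary variational problem for $F$. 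Your approach is cleaner conceptually and actually yields the slightly sharper $\alpha^*\le\sqrt\ep$; the paper's approach avoids the compactness step and stays entirely at the level of $F_N$.

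One small slip to fix: your displayed pointwise lower bound $F(x)\ge\max\bigl(0,\,\alpha^*-\tfrac{1}{2}+x,\,x-\tfrac{1}{2}-\ep\bigr)$ is not valid for $x>\tfrac{1}{2}$, since $1$-Lipschitz from $F(\tfrac{1}{2})=\alpha^*$ gives only the \emph{upper} bound $F(x)\le\alpha^*+(x-\tfrac{1}{2})$ there (and note the third term in your max is always dominated by the second when $\alpha^*\ge 0$, so as written the max is simply $\max(0,\alpha^*-\tfrac12+x)$). What you actually need on $[\tfrac{1}{2},\tfrac{1}{2}+\alpha^*+\ep]$ is the monotonicity bound $F(x)\ge F(\tfrac12)=\alpha^*$. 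With that correction, your four-piece integration at the stated breakpoints gives exactly
\[
0 \;+\; \tfrac{1}{2}\alpha^{*2} \;+\; \alpha^*(\alpha^*+\ep) \;+\; \tfrac{1}{2}(1-\alpha^{*2}) \;=\; \tfrac{1}{2}+\alpha^{*2}+\alpha^*\ep,
\]
so your conclusion $\alpha^{*2}+\alpha^*\ep\le\ep$, hence $\alpha^*\le\sqrt\ep$, stands.
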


\begin{proof} 
First, note that \begin{align}\label{smalldensity1} \frac{1}{N}\sum_{n \le N} g_n &= \int_0^{\frac{3}{2}+\epsilon} \frac{1}{N}\#\{n \le N : g_n > x\}dx \\ \nonumber &= \int_0^{\frac{1}{2}+\sqrt{\epsilon}} \frac{1}{N}\#\{n \le N : g_n > x\}dx + \int_{\frac{1}{2}+\sqrt{\epsilon}}^{\frac{3}{2}+\epsilon} \frac{1}{N}\#\{n \le N : g_n > x\}dx \\ \nonumber &= \frac{1}{2}+\sqrt{\epsilon}-\int_0^{\frac{1}{2}+\sqrt{\epsilon}} \frac{1}{N}\#\{n \le N : g_n \le x\}dx \\ \nonumber &\qquad \qquad \qquad \qquad \qquad \qquad + \int_{\frac{1}{2}+\sqrt{\epsilon}}^{\frac{3}{2}+\epsilon} \frac{1}{N}\#\{n \le N : g_n \in (x,\frac{3}{2}+\epsilon)\}dx. \end{align} Now, we claim that \begin{equation}\label{smalldensity2} \limsup_{N \to \infty} \int_{\frac{1}{2}+\sqrt{\epsilon}}^{\frac{3}{2}+\epsilon} \frac{1}{N}\#\{n \le N : g_n \in (x,\frac{3}{2}+\epsilon)\}dx \le \int_{\frac{1}{2}+\sqrt{\ep}}^{\frac{3}{2}+\ep} (\frac{3}{2}+\ep-x)dx.\end{equation} Indeed, the pointwise upper bound $$\frac{1}{N}\#\{n \le N : g_n \in (x,\frac{3}{2}+\epsilon)\} \le \min\Big(1,\frac{1}{N}\sum_{n \le N} \sum_{m \le N-n+1} 1_{g_n+\dots+g_{n+m-1} \in (x,\frac{3}{2}+\epsilon)}\Big)$$ together with the dominated convergence theorem and $\PPC(x, \frac{3}{2}+\ep)$ for $x \in (\frac{1}{2}+\sqrt{\ep},\frac{3}{2}+\ep)$, namely, $$\lim_{N \to \infty} \frac{1}{N}\sum_{n \le N} \sum_{m \le N-n+1} 1_{g_n+\dots+g_{n+m-1} \in (x,\frac{3}{2}+\epsilon)} = \frac{3}{2}+\epsilon-x,$$ gives \eqref{smalldensity2}. That the average gap of $(\lam_n)_n$ is $1$ corresponds to \begin{equation}\label{avggap1} \lim_{N \to \infty} \frac{1}{N}\sum_{n \le N} g_n = 1. \end{equation} Rearranging \eqref{smalldensity1}, taking $N \to \infty$, and using \eqref{smalldensity2} and \eqref{avggap1} gives \begin{align*} \limsup_{N \to \infty} \int_0^{\frac{1}{2}+\sqrt{\epsilon}} \frac{1}{N}\#\{n \le N : g_n \le x\}dx &\le -1+\frac{1}{2}+\sqrt{\epsilon}+\int_{\frac{1}{2}+\sqrt{\epsilon}}^{\frac{3}{2}+\epsilon} (\frac{3}{2}+\epsilon-x)dx \\ &= \frac{3}{2}\epsilon+\frac{1}{2}\epsilon^2-\epsilon^{3/2}.\end{align*} Thus, using the trivial $$\frac{1}{N} \#\{n \le N : g_n \le x\} \ge \frac{1}{N}\#\{n \le N : g_n \le \frac{1}{2}\}$$ for $x \in [\frac{1}{2},\frac{1}{2}+\sqrt{\ep}]$ and the even more trivial lower bound of $0$ when $x \in [0,\frac{1}{2}]$, yields $$\limsup_{N \to \infty} \sqrt{\epsilon}\cdot \frac{1}{N}\#\{n \le N : g_n \le \frac{1}{2}\} \le \frac{3}{2}\epsilon+\frac{1}{2}\epsilon^2-\epsilon^{3/2}.$$ Dividing by $\sqrt{\ep}$, Proposition \ref{smalldensity} follows.
\end{proof}

\vs

We now use the quantitative version of \eqref{F=x-1/2} to argue that the PPC($\frac{1}{2},\frac{3}{2}+\ep$) contribution comes nearly entirely from single gaps $g_n$. 

\vspace{1.5mm}

\begin{proposition}\label{smallPPCmatleast2}
For all $N$ sufficiently large, we have
$$\frac{1}{N} \sum_{n \le N} \sum_{2 \le m \le N-n+1} 1_{g_n+\dots+g_{n+m-1} \in (\frac{1}{2},\frac{3}{2}+\epsilon)} \le 2\sqrt{\epsilon}.$$
\end{proposition}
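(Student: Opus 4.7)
The strategy is to apply the $\PPC(\tfrac12,\tfrac32+\epsilon)$ hypothesis directly, obtaining the total pair-correlation count, and then subtract off the (nearly full) $m=1$ contribution to bound the $m\ge 2$ count.

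In detail, $\PPC$, extended to the open interval $I=(\tfrac12,\tfrac32+\epsilon)$ as noted earlier in this section, gives that the left-hand side with the $m=1$ term included equals $1+\epsilon+o(1)$, up to the negligible $O(1/N)$ discrepancy between the range $m\le N-n+1$ appearing in the proposition and the standard PPC range $m\le N-n$. For the $m=1$ term: since $g_n\le\tfrac32+\epsilon$ for every $n$ by standing assumption, the inclusion $g_n\in I$ fails only when $g_n\le\tfrac12$ or $g_n=\tfrac32+\epsilon$. The first set has density at most $2\sqrt{\epsilon}$ by Proposition \ref{smalldensity}. The second has density zero: applying $\PPC$ to an arbitrarily narrow neighborhood $(\tfrac32+\epsilon-\delta,\tfrac32+\epsilon+\delta)$ bounds the point mass at $\tfrac32+\epsilon$ by $2\delta$, for every $\delta>0$. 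Thus the $m=1$ count is at least $1-2\sqrt{\epsilon}-o(1)$.

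Subtracting from the total yields an $m\ge 2$ bound of $\epsilon+2\sqrt{\epsilon}+o(1)$. To tighten this to the stated $2\sqrt{\epsilon}$, I would invoke the sharper bound $\tfrac{3}{2}\sqrt{\epsilon}+O(\epsilon^{3/2})$ that the proof of Proposition \ref{smalldensity} actually yields (the statement $2\sqrt{\epsilon}$ being a rounding of its own proof's output); this gives $\epsilon+\tfrac{3}{2}\sqrt{\epsilon}+o(1)\le 2\sqrt{\epsilon}$ whenever $\epsilon\le\tfrac14$, certainly for $\epsilon=10^{-9}$. No serious obstacle arises; the only mildly delicate step is handling the point mass at the right endpoint $\tfrac32+\epsilon$, which PPC dispatches immediately.
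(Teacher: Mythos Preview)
Your argument is correct and takes a genuinely different route from the paper's. The paper does not invoke Proposition~\ref{smalldensity} at all; instead it reruns the integration machinery from that proof with the split point shifted to $\tfrac12-\sqrt{\epsilon}$, writes the $m=1$ count as the full PPC count minus the $m\ge 2$ count, applies dominated convergence to the integral, and obtains $\limsup_N \sqrt{\epsilon}\cdot(\text{$m\ge 2$ count})\le \tfrac32\epsilon+\tfrac12\epsilon^2+\epsilon^{3/2}$ directly. Your approach instead uses PPC on the single interval $(\tfrac12,\tfrac32+\epsilon)$ to get the total count $1+\epsilon+o(1)$ and subtracts the $m=1$ lower bound coming from Proposition~\ref{smalldensity}.

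Two remarks. First, your bound on the boundary discrepancy between $m\le N-n+1$ and $m\le N-n$ is not $O(1/N)$ in general (there is no lower bound on the gaps, so many $\lambda_n$ could cluster near $\lambda_{N+1}$); but PPC on a short interval gives $o(1)$, which is all you need. Second, the sharper output of Proposition~\ref{smalldensity}'s proof is $\tfrac32\sqrt{\epsilon}-\epsilon+\tfrac12\epsilon^{3/2}$, not $\tfrac32\sqrt{\epsilon}+O(\epsilon^{3/2})$; the $-\epsilon$ term only helps you, and in fact after subtraction the $+\epsilon$ from the total and the $-\epsilon$ here cancel, giving the clean $\tfrac32\sqrt{\epsilon}+\tfrac12\epsilon^{3/2}<2\sqrt{\epsilon}$. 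Your slightly weaker version $\epsilon+\tfrac32\sqrt{\epsilon}$ still suffices for $\epsilon\le\tfrac14$, as you note.

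What each approach buys: yours is shorter and avoids the integration and dominated convergence entirely, at the cost of reaching into the proof of the previous proposition for a constant sharper than its stated $2\sqrt{\epsilon}$; the paper's version is self-contained and in fact yields the identical final constant $\tfrac32\sqrt{\epsilon}+\tfrac12\epsilon^{3/2}+\epsilon$ (then rounded to $2\sqrt{\epsilon}$), so the two methods are numerically equivalent.
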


\begin{proof}
As in the proof of Proposition \ref{smalldensity}, $$\frac{1}{N}\sum_{n \le N} g_n = \frac{1}{2}-\sqrt{\epsilon}-\int_0^{\frac{1}{2}-\sqrt{\epsilon}} \frac{1}{N}\#\{n \le N : g_n \le x\}dx $$ $$\qquad \qquad \qquad \qquad \qquad \qquad \qquad\qquad \qquad +  \int_{\frac{1}{2}-\sqrt{\epsilon}}^{\frac{3}{2}+\epsilon} \frac{1}{N}\#\{n \le N : g_n \in (x,\frac{3}{2}+\epsilon)\}dx,$$ which, by merely dropping a (negative) term, gives \begin{equation}\label{matleast21}\frac{1}{N}\sum_{n \le N} g_n \le \frac{1}{2}-\sqrt{\epsilon}+\int_{\frac{1}{2}-\sqrt{\epsilon}}^{\frac{3}{2}+\epsilon} \frac{1}{N}\#\{n \le N : g_n \in (x,\frac{3}{2}+\epsilon)\}dx.\end{equation} We write $$\frac{1}{N}\#\{n \le N : g_n \in (x,\frac{3}{2}+\ep)\} = \frac{1}{N}\sum_{n \le N}\sum_{m \le N-n+1} 1_{g_n+\dots+g_{n+m-1} \in (x,\frac{3}{2}+\ep)}$$ $$\hspace{70mm} -\frac{1}{N}\sum_{n \le N}\sum_{2 \le m \le N-n+1} 1_{g_n+\dots+g_{n+m-1} \in  (x,\frac{3}{2}+\ep)}$$ and use the same dominated convergence theorem argument as in the proof of Proposition \ref{smalldensity} to obtain, from \eqref{matleast21}, that \begin{align*} &\limsup_{N \to \infty} \int_{\frac{1}{2}-\sqrt{\epsilon}}^{\frac{3}{2}+\epsilon} \frac{1}{N}\sum_{n \leq N}\sum_{2 \le m \le N-n+1} 1_{g_n+\dots+g_{n+m-1} \in (x,\frac{3}{2}+\epsilon)}dx \\ &\qquad \qquad \le -1+\frac{1}{2}-\sqrt{\epsilon}+\int_{\frac{1}{2}-\sqrt{\epsilon}}^{\frac{3}{2}+\epsilon} (\frac{3}{2}+\epsilon-x)dx \\ &\qquad \qquad = \frac{3}{2}\epsilon+\frac{1}{2}\epsilon^2+\epsilon^{3/2},\end{align*} and thus $$\limsup_{N \to \infty} \sqrt{\epsilon}\cdot \frac{1}{N}\sum_{n \leq N}\sum_{2 \le m \le N-n+1} 1_{g_n+\dots+g_{n+m-1} \in (\frac{1}{2},\frac{3}{2}+\epsilon)} \le \frac{3}{2}\epsilon+\frac{1}{2}\epsilon^2+\epsilon^{3/2}.$$ Dividing by $\sqrt{\ep}$, the proposition follows.
\end{proof}

\vs

We now exploit the aforementioned ``bias" towards $0$ exhibited by large intervals with sum of gaps at most $1/2$. We will need a technical lemma, proven in the appendix but assumed for now.

\vs

\begin{lemma}\label{5/12lemma}
For positive integers $a,b,c,L$ satisfying $1 \le a \le b \le c \le L$, we have  $$(a-1)a+(b-a)(b-a+1)+(c-b)(c-b+1)+(L-c)(L-c+1) \qquad \qquad$$ $$\qquad \qquad \qquad \qquad +(a-1)(b-a)+(b-a)(c-b)+(c-b)(L-c) \ge \frac{5}{12}L^2+\frac{1}{6}L-\frac{7}{12}.$$
\end{lemma}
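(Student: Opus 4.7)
The plan is to change variables so that the inequality becomes a statement about a quadratic form. Setting $x_1 = a-1$, $x_2 = b-a$, $x_3 = c-b$, $x_4 = L-c$ (nonnegative integers summing to $L-1$), a short expansion shows that the left-hand side equals $Q(x) + (L-1)$, where
\[
Q(x) \;:=\; \sum_{i=1}^4 x_i^2 + x_1 x_2 + x_2 x_3 + x_3 x_4.
\]
One also checks that the right-hand side can be rewritten as $\tfrac{5}{12}(L-1)^2 + (L-1)$, so after cancelling the common $(L-1)$ on both sides, the lemma reduces to proving the purely quadratic inequality $Q(x) \geq \tfrac{5}{12}(x_1+x_2+x_3+x_4)^2$ for real $x_1,\dots,x_4 \ge 0$.

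To locate the extremal configuration, I would exploit the symmetry $(x_1,x_2,x_3,x_4) \mapsto (x_4,x_3,x_2,x_1)$ of $Q$: restricting to $x_1=x_4=p$, $x_2=x_3=q$ and minimising $Q/(\sum x_i)^2$ as a one-variable problem yields the extremal ray $(x_1,x_2,x_3,x_4) \propto (2,1,1,2)$ with minimum value exactly $\tfrac{5}{12}$. With this ray identified, I would prove the reduced inequality via the sum-of-squares identity
\[
12\,Q(x) - 5(x_1+x_2+x_3+x_4)^2 \;=\; (x_1+x_4-2x_2-2x_3)^2 + 3(x_1-x_4)^2 + 3(x_1+x_2-x_3-x_4)^2,
\]
each of whose three linear forms on the right vanishes at $(2,1,1,2)$, as it must. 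Since the right-hand side is manifestly $\ge 0$, this completes the proof (and incidentally establishes the inequality for all real $x_i$, not only nonnegative ones, so copositivity is never invoked).

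The only non-routine step is spotting the three squared forms on the right. The $1\leftrightarrow 4$, $2\leftrightarrow 3$ symmetry and the known equality ray $(2,1,1,2)$ cut the search down to a small ansatz (essentially: one symmetric square capturing $p-2q$, plus antisymmetric squares in $x_1-x_4$ and $x_2-x_3$), after which matching coefficients pins down the three weights $1,3,3$ and direct expansion verifies the identity. Everything else is bookkeeping, so I do not expect any serious obstacle.
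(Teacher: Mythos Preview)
Your proof is correct and substantially cleaner than the paper's. After the same relaxation to real variables, the paper minimizes the left-hand side by a case analysis driven by first-order conditions: it fixes $L$, examines whether $c=L$, whether $b=c$, etc., computes $\partial/\partial a$, $\partial/\partial b$, $\partial/\partial c$ of $\LHS-\RHS$, and in each of three cases (with several subcases) verifies nonnegativity by one-variable quadratic minimization. Your substitution $x_1=a-1$, $x_2=b-a$, $x_3=c-b$, $x_4=L-c$ is exactly the right move: both sides become $(L-1)$ plus a homogeneous quadratic, and the sum-of-squares identity
\[
12\,Q(x)-5\Big(\sum_i x_i\Big)^2=(x_1+x_4-2x_2-2x_3)^2+3(x_1-x_4)^2+3(x_1+x_2-x_3-x_4)^2
\]
(which I have checked coefficient by coefficient) settles the matter in one line, with no case analysis and no sign constraints on the $x_i$. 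The bonus you get is an explicit description of the equality case: all three linear forms vanish precisely on the ray through $(2,1,1,2)$, i.e., $(a,b,c)=\big(\tfrac{L+2}{3},\tfrac{L+1}{2},\tfrac{2L+1}{3}\big)$, which in the paper's argument only surfaces implicitly in the final subcase. The paper's approach has the modest advantage of requiring no insight to find the decomposition, but yours is the proof one would want to record.
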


\vs

Lemma \ref{5/12lemma} allows us to show that, instead of getting the desired $\frac{1}{4}+\frac{1}{8} = \frac{3}{8}$ for $\PPC(0,\frac{1}{4})+\PPC(0,\frac{1}{8})$, we get at least $\frac{5}{12} = \frac{3}{8}+\frac{1}{24}$, asymptotically for large intervals.

\vspace{1mm}

\begin{proposition}\label{biasnear0}
Let $L \ge 1$ be a positive integer and $g_1,\dots,g_L$ be positive reals with $\sum_{i=1}^L g_i \le \frac{1}{2}$. Then, $$\sum_{n \le L}\sum_{m \le L-n+1} 1_{g_n+\dots+g_{n+m-1} \le \frac{2}{8}}+\sum_{n \le L}\sum_{m \le L-n+1} 1_{g_n+\dots+g_{n+m-1} \le \frac{1}{8}} \ge \frac{5}{6}\binom{L+1}{2} - \frac{5}{6}L.$$
\end{proposition}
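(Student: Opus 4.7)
The plan is to rewrite the LHS as a count of pairs of partial sums, then lower-bound this count via a four-bucket pigeonhole that sets up precisely the expression of Lemma \ref{5/12lemma}.

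Set $S_0 := 0$ and $S_k := g_1 + \cdots + g_k$, so that $S_L \le \tfrac{1}{2}$. The map $(n,m) \mapsto (n-1, n+m-1)$ is a bijection between $\{(n,m) : 1 \le n,\ 1 \le m,\ n+m-1 \le L\}$ and $\{(i,j) : 0 \le i < j \le L\}$, identifying $g_n + \cdots + g_{n+m-1}$ with $S_j - S_i$. So the LHS of the proposition equals $N(\tfrac{1}{4}) + N(\tfrac{1}{8})$, where $N(t) := \#\{0 \le i < j \le L : S_j - S_i \le t\}$.

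Partition $[0, \tfrac{1}{2}]$ into four half-open subintervals of width $\tfrac{1}{8}$ and let $n_k$ denote the number of partial sums $S_0, \ldots, S_L$ lying in the $k$-th subinterval, so $n_1 + n_2 + n_3 + n_4 = L+1$. Two partial sums in the same subinterval differ by at most $\tfrac{1}{8}$, contributing to both $N(\tfrac{1}{4})$ and $N(\tfrac{1}{8})$; two partial sums in adjacent subintervals differ by at most $\tfrac{1}{4}$, contributing to $N(\tfrac{1}{4})$. This yields
$$N(\tfrac{1}{4}) + N(\tfrac{1}{8}) \ge 2\sum_{k=1}^4 \binom{n_k}{2} + n_1 n_2 + n_2 n_3 + n_3 n_4 = \sum_{k=1}^4 n_k^2 + n_1 n_2 + n_2 n_3 + n_3 n_4 - (L+1).$$

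Now apply Lemma \ref{5/12lemma} with $a := n_1 + 1$, $b := n_1 + n_2 + 1$, $c := n_1 + n_2 + n_3 + 1$, and with the lemma's $L$ replaced by $L+2$, so that $(a-1,\, b-a,\, c-b,\, L+2-c) = (n_1, n_2, n_3, n_4)$. Since $\sum_k n_k(n_k+1) = \sum_k n_k^2 + (L+1)$, a short rearrangement converts the lemma's conclusion to $\sum_k n_k^2 + n_1 n_2 + n_2 n_3 + n_3 n_4 \ge \tfrac{5}{12}(L+1)^2$. Combining with the pigeonhole bound above gives $N(\tfrac{1}{4}) + N(\tfrac{1}{8}) \ge \tfrac{(L+1)(5L - 7)}{12}$, and the elementary verification $(L+1)(5L-7) \ge 5L(L-1) = 12\bigl(\tfrac{5}{6}\binom{L+1}{2} - \tfrac{5}{6}L\bigr)$ for integers $L \ge 3$ closes that range. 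The edge cases $L \in \{1,2\}$ are trivial: for $L = 1$ the RHS is $0$, and for $L = 2$ the constraint $g_1 + g_2 \le \tfrac{1}{2}$ forces $\min(g_1, g_2) \le \tfrac{1}{4}$, so the LHS is at least $1 \ge \tfrac{5}{6}$.

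The main obstacle is Lemma \ref{5/12lemma} itself, which is the arithmetic heart of the ``bias near $0$'' and is outsourced to the appendix; given that lemma, the argument above is just a clean bucketing argument into four equal subintervals of $[0, \tfrac{1}{2}]$.
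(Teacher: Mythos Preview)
Your proof is correct and follows essentially the same approach as the paper: both partition $[0,\tfrac{1}{2}]$ into four equal subintervals, lower-bound the two PPC counts by same-bucket and adjacent-bucket contributions, and then invoke Lemma~\ref{5/12lemma}. Your parametrization via bucket counts $n_1,\dots,n_4$ and the application of the lemma with $L$ replaced by $L+2$ is a slightly cleaner bookkeeping of the paper's argument (which instead tracks the crossing indices $a,b,c$ and then crudely lower-bounds $(L-c+1)(L-c+2)$ before invoking the lemma), but the underlying idea is identical.
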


\begin{proof}
By scaling, it suffices to prove the proposition when $\sum_{i=1}^L g_i = \frac{1}{2}$. Suppose $\sum_{i=1}^L g_i = \frac{1}{2}$. Let $$a = \min\{j \le L : g_1+\dots+g_j \ge \frac{1}{8}\},$$ $$b = \min\{j \le L : g_1+\dots+g_j \ge \frac{2}{8}\},$$ $$c = \min\{j \le L : g_1+\dots+g_j \ge \frac{3}{8}\},$$ and note $$\sum_{n \le L}\sum_{m \le L-n+1} 1_{g_n+\dots+g_{n+m-1} \le \frac{1}{8}} \ge \frac{(a-1)a}{2}+\frac{(b-a)(b-a+1)}{2}+\frac{(c-b)(c-b+1)}{2}$$ $$\hspace{30mm} +\frac{(L-c+1)(L-c+2)}{2}.$$ by doing casework in which intervals $n$ and $n+m-1$ lie (the different intervals are $[1,a),[a,b),[b,c),[c,L]$). Similarly, $$\sum_{n \le L} \sum_{m \le L-n+1} 1_{g_n+\dots+g_{n+m-1} \le \frac{2}{8}} \ge \frac{(a-1)a}{2}+\frac{(b-a)(b-a+1)}{2}+\frac{(c-b)(c-b+1)}{2}$$ $$\hspace{60mm} +\frac{(L-c+1)(L-c+2)}{2}+(a-1)(b-a)$$ $$\hspace{52.7mm} +(b-a)(c-b)+(c-b)(L-c+1),$$ Therefore, $$\sum_{n \le L} \sum_{m \le L-n+1} 1_{g_n+\dots+g_{n+m-1} \le \frac{2}{8}} +  \sum_{n \le L}\sum_{m \le L-n+1} 1_{g_n+\dots+g_{n+m-1} \le \frac{1}{8}}$$ $$\hspace{40mm} \ge (a-1)a+(b-a)(b-a+1)+(c-b)(c-b+1)+(L-c+1)(L-c+2)$$ $$\hspace{26.5mm} +(a-1)(b-a)+(b-a)(c-b)+(c-b)(L-c+1).$$ Lower bounding $L-c+1$ and $L-c+2$ by $L-c$ and $L-c+1$, respectively, Lemma \ref{5/12lemma} finishes the proof of Proposition \ref{biasnear0}, since $\frac{5}{12}L^2+\frac{1}{6}L-\frac{7}{12} \ge \frac{5}{6}{L + 1 \choose 2}-\frac{5}{6}L$ for $L \ge 1$. 
\end{proof}

\vs

We now proceed to isolate the relevant ``$0$-density" parts of the sequence on which the gaps are at most $1/2$, in order to exploit the bias that Proposition \ref{biasnear0} illustrates. 

\vspace{4.5mm}

Here and henceforth, we let $[N]:=\{1,2,\ldots,N\}$.

\begin{definition}
For a nonempty interval $J \sub [N]$, let $L(J),R(J)$ denote the left and right endpoints of $J$, respectively, and let $\summ(J) = \sum_{n \in J} g_n$. 
\end{definition}

\begin{definition}
For a interval $J \sub [N]$, we denote \vspace{0.5mm} $$\PPC^J(0,a) := \sum_{n \le n' \in J} 1_{g_n+\dots+g_{n'} < a}.$$ For intervals $J_1, J_2 \sub [N]$, with $R(J_1) < L(J_2)$, we denote \vspace{1mm} $$\PPC^{J_1,J_2}(0,a) := \sum_{(n,n') \in J_1\times J_2} 1_{g_n+\dots+g_{n'} < a}.$$ 
\end{definition}

\vspace{1mm}

Take a large $N$. Let $\mc{I}^N$ denote the collection of all maximal intervals on which the gaps $g_n$ are at most $\frac{1}{2}$. More formally, we define $\mc{I}^N$ to be the collection of all intervals $I \sub [N]$ such that (1) $g_n \le \frac{1}{2}$ for each $n \in I$, (2) $L(I) = 1$ or $g_{L(I)-1} > \frac{1}{2}$, and (3) $R(I) = N$ or $g_{R(I)+1} > \frac{1}{2}$.

\vs

We begin by noting the following. 

\begin{lemma} \label{Ibounds}
For all large $N$, we have 
\[\sum_{I \in \mc I^N} |I| \leq 2 \sqrt{\epsilon} N.\]
Furthermore, as $N \to \infty$, we have
\[ \sum_{I \in \mc I^N} \binom{|I|+1}{2} \geq N(\frac{1}{2}+o(1)).\]
\end{lemma}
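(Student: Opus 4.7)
The first bound is essentially immediate from Proposition \ref{smalldensity}. By the maximality built into the definition of $\mc I^N$, its intervals are precisely the connected components of $\{n \in [N] : g_n \le 1/2\}$, hence pairwise disjoint with
\[\sum_{I \in \mc I^N} |I| \;=\; \#\{n \le N : g_n \le 1/2\} \;\le\; 2\sqrt{\epsilon}\, N\]
for all sufficiently large $N$, the last step being Proposition \ref{smalldensity}.

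For the second bound the plan is to inject the pairs contributing to $\PPC(0, 1/2)$ into the ``intra-interval'' pairs counted by $\sum_{I} \binom{|I|+1}{2}$. Applying the PPC condition at $I = [0, 1/2]$ yields, for large $N$,
\[\#\{(i,j) : 1 \le i < j \le N,\ \lambda_j - \lambda_i \le 1/2\} \;=\; \tfrac{1}{2} N + o(N).\]
For each such pair, $\lambda_j - \lambda_i = g_i + g_{i+1} + \cdots + g_{j-1}$ is a sum of strictly positive gaps totalling at most $1/2$, so each individual $g_k \le 1/2$ for $k \in \{i, \ldots, j-1\}$. Being consecutive indices all satisfying $g_k \le 1/2$, they all lie inside a single maximal interval $I \in \mc I^N$. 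The map $(i,j) \mapsto (n,n') := (i, j-1)$ is then an injection into the set of pairs $(n,n')$ with $n \le n'$ and $\{n, n'\} \sub I$ for some $I \in \mc I^N$, and the size of this target set is $\sum_{I \in \mc I^N} \binom{|I|+1}{2}$ (where the ``$+1$'' accounts for the diagonal case $n = n'$, corresponding to $j = i+1$). Combining the injection with the PPC count gives
\[\sum_{I \in \mc I^N} \binom{|I|+1}{2} \;\ge\; \tfrac{1}{2}N + o(N) \;=\; N\bigl(\tfrac{1}{2} + o(1)\bigr),\]
as desired.

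There is no real obstacle here; the only substantive point is the observation that positivity of the gaps turns the global bound $g_i + \cdots + g_{j-1} \le 1/2$ into the pointwise bound $g_k \le 1/2$, which is exactly what lets one conclude that $i, i+1, \ldots, j-1$ all lie in a \emph{single} interval of $\mc I^N$. Edge effects (e.g.\ pairs with $n' = N$ on the right-hand side, which correspond to $j = N+1$ and thus fall outside the PPC count) only cause the target of the injection to overcount, which strengthens rather than weakens the inequality, so no separate treatment is needed.
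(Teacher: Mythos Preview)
Your proof is correct and follows essentially the same route as the paper: the first bound is Proposition~\ref{smalldensity}, and for the second you use $\PPC(0,\tfrac12)$ together with the observation that any contributing pair has all its intermediate indices in a single maximal interval of $\mc I^N$, then bound by the trivial count $\binom{|I|+1}{2}$. The paper phrases this more tersely as $\PPC^{[N]}(0,\tfrac12)=\sum_I\PPC^I(0,\tfrac12)\le\sum_I\binom{|I|+1}{2}$, but your explicit injection $(i,j)\mapsto(i,j-1)$ and your remark on edge effects unpack exactly the same content.
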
 
\begin{proof}
By definition we have $$\sum_{I \in \mc I^N} |I| = \frac{1}{N} \# \{n \leq N : g_n \leq \frac{1}{2}\}.$$ Proposition \ref{smalldensity} then gives the first inequality. For the second inequality, note, by the maximality of the intervals comprising $\mc{I}^N$, that
\[ (\frac{1}{2} +o(1))N = \PPC^{[N]}(0,\frac{1}{2}) = \sum_{I \in \mc{I}^N}\PPC^I(0,\frac{1}{2}) \leq \sum_{I \in \mc{I}^N} \binom{|I|+1}{2}.\] 
\end{proof}

We provide a quick remark on motivation. 

\begin{remark}\label{remarker}
Observe that, if it were the case that $\summ(I) \le \frac{1}{2}$ for each $I \in \mc{I}^N$, then we could conclude the proof of Theorem \ref{main} as follows. By Proposition \ref{biasnear0}, one has 
\begin{align*}
(\frac{3}{8} + o(1))N &= \PPC^{[N]}(0,\frac{1}{8}) + \PPC^{[N]}(0,\frac{1}{4}) \\ &\geq \frac{5}{6}\sum_I \binom{|I|+1}{2} - \frac{5}{6}\sum_I |I| \\ &\geq   (\frac{5}{12} - \frac{5}{3}\sqrt \epsilon+o(1))N
\end{align*}
as $N \to \infty$, which would give our desired contradiction by taking $N$ sufficiently large. 
\end{remark}

However, it need not be the case that $\summ(I) \le \frac{1}{2}$ for each $I \in \mc{I}^N$. In light of Remark \ref{remarker}, therefore, the strategy is to partition each $I \in \mc{I}^N$ into subintervals $\{J_k^I\}_k$ with $\summ(J_k^I) \leq 1/2$ for each $k$ and such that, for all $a\in(0,1/2]$ and all $k$, the contribution to PPC$(0,a)$ from windows that overlap with $J_k^I$ comes nearly entirely from windows that lie entirely inside $J_k^I$. The existence of such a partition is not at all immediate. We provide in Proposition \ref{partition} below a precise statement of what is needed. 

\begin{proposition}\label{partition}
Let $\mc{I}^N$ be as above. There exists a partition of each $I \in \mc{I}^N$ into subintervals $\{J_k^I\}_{k=1}^{r_I}$ such that the following two hold.

\vspace{1.5mm}

\begin{enumerate}
\item $\summ(J_k^I)\leq \frac{1}{2}$ for each $I \in \mc{I}^N$ and $k \in \{1,\dots,r_I\}$, and 

\vspace{1.5mm}

\item $\sum_{I \in \mc I^N}\sum_{k=1}^{r_I} \binom{|J^I_k|+1}{2} \geq  (\frac{1}{2} - 4\sqrt{2}\epsilon^{1/4}) N$.
\end{enumerate}
\end{proposition}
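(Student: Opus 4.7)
The plan is to define the partition by a left-to-right greedy algorithm and then to establish condition (2) by controlling the number of ``lost'' pairs. For each $I \in \mc{I}^N$ with $\summ(I) \leq \frac{1}{2}$, I take $r_I = 1$ and $J_1^I = I$. For each $I$ with $\summ(I) > \frac{1}{2}$, I set $J_1^I$ to be the longest prefix of $I$ with sum $\leq \frac{1}{2}$, then iteratively set $J_k^I$ to be the longest prefix of $I \setminus \bigcup_{j<k}J_j^I$ with sum $\leq \frac{1}{2}$. Condition (1) holds by construction. By the maximality of each piece, $\summ(J_k^I) + g_{L(J_{k+1}^I)} > \frac{1}{2}$ for $k < r_I$, and summing this inequality over $k$ gives the useful a priori bound $r_I \leq 4\,\summ(I) + 1$.

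I then rewrite condition (2) in terms of a ``loss''. The identity
\[
\Delta \;:=\; \sum_I \binom{|I|+1}{2} - \sum_{I, k}\binom{|J_k^I|+1}{2} \;=\; \sum_I \sum_{k < k'}|J_k^I|\cdot|J_{k'}^I|
\]
shows that $\Delta$ counts pairs $(n, n')$ with $n \leq n'$ both in the same $I$ but lying in different pieces of the partition. Since Lemma \ref{Ibounds} already gives $\sum_I \binom{|I|+1}{2} \geq (\frac{1}{2}+o(1))N$, condition (2) reduces to proving
\[
\Delta \;\leq\; (4\sqrt{2}\,\epsilon^{1/4} + o(1))\,N.
\]

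To bound $\Delta$, I split the lost pairs according to their gap sum: $\Delta = \Delta_< + \Delta_{\mathrm{mid}} + \Delta_>$, corresponding to pair sums in $(0, \tfrac{1}{2})$, in $(\tfrac{1}{2}, \tfrac{3}{2}+\epsilon)$, and $\geq \tfrac{3}{2}+\epsilon$ respectively. Since any lost pair satisfies $n < n'$, Proposition \ref{smallPPCmatleast2} immediately yields $\Delta_{\mathrm{mid}} \leq 2\sqrt{\epsilon}\,N$. The term $\Delta_>$ involves pairs that span at least four consecutive indices of $I$ (every $g_m \leq \tfrac{1}{2}$ forces this), and so it is controlled via $\sum_I |I| \leq 2\sqrt{\epsilon}\,N$ combined with a crude PPC-style count.

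The principal obstacle is the term $\Delta_<$. For a lost adjacent-piece pair $(n, n')$ with $n \in J_k^I$, $n' \in J_{k+1}^I$ and gap sum less than $\tfrac{1}{2}$, the greedy maximality of $J_k^I$ forces the gap sum to lie in the narrow window $(\tfrac{1}{2} - g_{L(J_{k+1}^I)},\, \tfrac{1}{2})$, and PPC bounds globally the number of pairs with sum in such a window by its width times $N$. Playing this window bound off against the trivial per-piece bound $|J_k^I|\cdot|J_{k+1}^I|$ via AM--GM and then applying Cauchy--Schwarz against the aggregate inequality $\sum_{I, k} g_{L(J_{k+1}^I)} \leq \sum_I \summ(I) \leq \sqrt{\epsilon}\,N$ is the delicate step that produces the $\epsilon^{1/4}$ rate and the explicit constant $4\sqrt{2}$. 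Non-adjacent lost pairs with small sum are handled by an analogous argument, noting that each intermediate piece contributes approximately $\tfrac{1}{2}$ to the gap sum so that only very few intermediate pieces are possible. Carrying out this Cauchy--Schwarz step cleanly, and extracting the correct constant, is the main quantitative challenge of the proof.
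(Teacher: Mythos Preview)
Your proposal has a genuine gap: the left-to-right greedy partition does not have the property you claim, and the paper explicitly rejects this partition with a counterexample (the interval $I_1$ in Section~\ref{partitionsection}). Your central assertion---that for a cross pair $(n,n') \in J_k^I \times J_{k+1}^I$ with $g_n+\dots+g_{n'} < \tfrac12$, maximality forces the sum into $(\tfrac12 - g_{L(J_{k+1}^I)},\,\tfrac12)$---is false. Left-to-right maximality only gives $\summ(J_k^I)+g_{L(J_{k+1}^I)}>\tfrac12$; it says nothing about the \emph{tail} sum $g_n+\dots+g_{R(J_k^I)}$ when $n>L(J_k^I)$. Concretely, for gaps $(0.4,\,0,\dots,0,\,0.2,\dots)$ the first piece ends just before the $0.2$, and the pair $n=2$, $n'=L(J_2)$ has sum $0.2$, below your alleged lower bound $0.3$. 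In the paper's example $I_1=(\tfrac25,0,\dots,0,\tfrac13,0,\dots,0,\tfrac25)$ with $L$ zeros in each run, left-to-right greedy yields $|J_1|,|J_2|\approx L$ and $\Theta(L^2)$ cross pairs in $J_1\times J_2$ all with sum exactly $\tfrac13$, while only $O(L)$ cross pairs have sum exceeding $\tfrac12$. Thus $\Delta_<$ is a constant fraction of $\binom{|I|+1}{2}$ and neither your window argument nor Proposition~\ref{smallPPCmatleast2} gives any leverage.

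The paper instead partitions each $I$ by repeatedly extracting the \emph{globally longest} remaining subinterval with sum $\le\tfrac12$, not the longest prefix. This ordering guarantees that whenever a small-sum cross pair straddles pieces, the later-chosen piece $J_{b_1(k)}$ (or $J_{b_2(k)}$ for the sandwiched case) is short, and Propositions~\ref{smallbad1interval}--\ref{smallbad2interval} show the number of cross pairs with sum in $(\tfrac12,\tfrac32)$ is at least $\tfrac12|J_{b}|^2$. Cauchy--Schwarz against Proposition~\ref{smallPPCmatleast2} then produces the $\epsilon^{1/4}$ rate and the constant $4\sqrt2$. Your Cauchy--Schwarz endgame is in the right spirit, but it requires this different partition (and the trichotomy of Lemma~\ref{trichotomy}) to get off the ground.
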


\vspace{1.5mm}

To quickly conclude the proof of Theorem \ref{main}, we postpone the proof of Proposition \ref{partition} (and the description of the partition) to the following section, and assume it for now. 

%
%

\vs

\begin{proof}[Proof of Theorem \ref{main}]

Proceeding along the lines of Remark \ref{remarker}, Proposition \ref{biasnear0}, Proposition \ref{partition}, and Lemma \ref{Ibounds} yield 
\begin{align*} 
(\frac{3}{8}+o(1))N 
&= \PPC^{[N]}(0,\frac{1}{8})+\PPC^{[N]}(0,\frac{1}{4}) \\ 
&\ge \sum_{I \in \mc{I}^N} \sum_k \PPC^{J_k^I}(0,\frac{1}{8})+\PPC^{J_k^I}(0,\frac{1}{4})\\ 
&\ge \frac{5}{6}\sum_{I \in \mc{I}^N} \sum_k \binom{|J_k^I|+1}{2} - \frac{5}{6}\sum_{I \in \mc{I}^N} \sum_k |J_k^I|  \\ 
&\ge \frac{5}{6}(\frac{1}{2}-4\sqrt{2}\epsilon^{1/4})N-\frac{5}{3}\sqrt{\epsilon}N.
\end{align*} Rearranging, dividing by $N$, and sending $N \to \infty$, we obtain $$\frac{10\sqrt{2}}{3}\epsilon^{1/4} + \frac{5}{3}\sqrt \epsilon - \frac{1}{24} \ge 0,$$ which is indeed false for $\epsilon = 10^{-9}$ (but not for $\epsilon = 10^{-8}$). This gives the desired contradiction to our assumption that a sequence $(\lam_n)_n$ with PPC, average gap $1$, and maximum gap $3/2+\ep$ exists.
\end{proof}


\vspace{1.5mm}

\section{Partitioning, and a proof of Proposition \ref{partition}}\label{partitionsection}

{\normalfont 

\vspace{1.5mm}

The following examples are helpful to keep in mind to explain the need for care when choosing the partition of a given $I \in \mc{I}^N$, and to help motivate the partition we will use. \vspace{1.5mm} $${\textstyle I_1 = \left\{\frac{2}{5},0,0,0,\dots,0,0,0,\frac{1}{3},0,0,0,\dots,0,0,0,\frac{2}{5}\right\}}$$ \vspace{1.5mm} $${\textstyle I_2 = \left\{\frac{1}{4},0,0,0,\dots,0,0,0,\frac{1}{2},\frac{1}{2},\frac{1}{2},\frac{1}{2},\frac{1}{2},0,0,0,\dots,0,0,0,\frac{1}{4}\right\}}$$ \vspace{1.5mm} $${\textstyle I_3 = \left\{0,0,\dots,0,0,\frac{1}{3},0,0,\dots,0,0,\frac{1}{3},0,0,\dots,0,0\right\}.}$$ 

\vs

The first example $I_1$ shows that a ``greedy division", in which one goes from left to right, dividing immediately before the sum first exceeds $1/2$, will not work. Indeed, that division is $${\textstyle \left\{\frac{2}{5},0,0,0,\dots,0,0,0\right\},\left\{\frac{1}{3},0,0,0,\dots,0,0,0\right\},\left\{\frac{2}{5}\right\}},$$ which is problematic as there will be much contribution to the PPC($0,\frac{1}{2}$) count coming from different subintervals; specifically, any index besides the first in the first subinterval and any index in the second subinterval would prove a nontrivial contribution. 

\vspace{1mm}

Similarly, another ``greedy division" in which the largest numbers successively ``claim" the largest subinterval they can, will not work. For the case of $I_1$, the division is $${\textstyle \left\{\frac{2}{5},0,0,0,\dots,0,0,0\right\},\left\{\frac{1}{3}\right\},\left\{0,0,0,\dots,0,0,0,\frac{2}{5}\right\},}$$ which has a large contribution to $\PPC(0,\frac{1}{2})$ coming from any index in the first subinterval besides the first and any index in the last subinterval besides the last. Note thus that even ``non-adjacent" subintervals can cause issues.

\vspace{1mm}

A division that does work for $I_1$ is $${\textstyle \left\{\frac{2}{5}\right\},\left\{0,0,0,\dots,0,0,0,\frac{1}{3},0,0,0,\dots,0,0,0\right\},\left\{\frac{2}{5}\right\},} $$ as there is only a minor contribution (namely, linear in the size of the interval rather than quadratic) coming from different subintervals. 

\vs

For $I_2$, essentially any reasonable division is permissible, but we draw attention to it as it shows that sometimes the \textit{reason} for negligible contribution from different subintervals are subintervals in between. For example, if we decompose as $${\textstyle \left\{\frac{1}{4},0,0,0,\dots,0,0,0\right\},\left\{\frac{1}{2}\right\},\left\{\frac{1}{2}\right\},\left\{\frac{1}{2}\right\},\left\{\frac{1}{2}\right\},\left\{\frac{1}{2}\right\},\left\{0,0,0,\dots,0,0,0,\frac{1}{4}\right\},}$$ then the reason that there is no contribution to $\PPC(0,\frac{1}{2})$ from the subintervals $\{\frac{1}{4},0,0,\dots,0,0\}$ and $\{0,0,\dots,0,0,\frac{1}{4}\}$ are the five subintervals $\{\frac{1}{2}\}$ in between. 

\vs

For $I_3$, any division will admit a large PPC contribution from different subintervals. This would of course be harmful, but we make use of the fact that it won't exist often in our situation, since it provides a nontrivial contribution to $\PPC(\frac{1}{2},1)$ (which we already know comes nearly entirely from single gaps).} 

\vsss

With the above examples in mind, we now choose the partition we use, to prove Proposition \ref{partition}. 

\vs

Fix $I \in \mc{I}^N$. In the following definition, ties may be broken arbitrarily. Let $J_1$ be the largest subinterval of $I$ with $\summ(J_1) \le \frac{1}{2}$. With $J_1,\dots,J_r$ already defined, if $\cup_{k=1}^r I_k \not = I$, let $J_{r+1}$ be the largest subinterval of $I\setminus\cup_{k=1}^r J_k$ with $\summ(J_{r+1}) \le \frac{1}{2}$. Let $J_1,\dots,J_s$ be all the subintervals resulting from this process. Of course $s \le |I| < +\infty$. 

\vs

Clearly $I = \sqcup_{k=1}^s J_k$ and $\summ(J_k) \leq 1/2$ for each $k$, establishing the first requirement of Proposition \ref{partition}. We now begin to proceed to establish the second.

\vs

Hopefully not confusing the reader, we renumber now so that $J_1$ is the leftmost interval, with $J_2$ to the immediate right of $J_1$, $J_3$ to the immediate right of $J_2$, etc.. For $1 \le k \le s-1$, let $g_1(k) \in \{k,k+1\}$ and $b_1(k) \in \{k,k+1\}\setminus \{g_1(k)\}$ be such that $J_{g_1(k)}$ was chosen before $J_{b_1(k)}$. Note, in particular, that $|J_{g_1(k)}| \ge |J_{b_1(k)}|$. 

\vs

We quickly pin down exactly which different subintervals need to be considered with regards to their contribution to $\PPC(0,a)$, for $a \leq 1/2$. 

\vspace{1.5mm}

\begin{definition}
Call $k \in [2,s-1]$ \textit{sandwiched} if it was chosen after each of its neighboring subintervals, i.e., if $b_1(k-1) = k$ and $b_1(k) = k$. For a sandwiched $k$, let $g_2(k) \in \{k-1,k+1\}$ and $b_2(k) \in \{k-1,k+1\}\setminus\{g_2(k)\}$ be such that $J_{g_2(k)}$ was chosen before $J_{b_2(k)}$. In particular, $|J_{g_2(k)}| \ge |J_{b_2(k)}|$. 
\end{definition}

\vspace{1.5mm}

\begin{lemma}\label{trichotomy}
If $n \le n' \in I$ have $g_n+\dots+g_{n'} \le \frac{1}{2}$, then either \begin{enumerate} \item $(n,n') \in J_k\times J_k$ for some $k$, \item $(n,n') \in J_k\times J_{k+1}$ for some $k$, or \item $(n,n') \in J_{k-1}\times J_{k+1}$ for some sandwiched $k$.\end{enumerate}
\end{lemma}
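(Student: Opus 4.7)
Fix $n \in J_k$ and $n' \in J_{k'}$ with $k \le k'$ and $g_n + \cdots + g_{n'} \le \frac{1}{2}$. The cases $k' = k$ and $k' = k+1$ give (1) and (2) of the lemma with no work, so the plan is to rule out $k' - k \ge 3$ entirely, and to show that $k' - k = 2$ forces $k+1$ to be sandwiched. Throughout, the tool is the maximality (in length) of each $J_j$ at the moment of its selection in the greedy procedure, combined with the following adjacency observation: since $J_j$ and $J_{j+1}$ are adjacent in the final left-to-right renumbering of $I$, there is nothing between them inside $I$, so no previously chosen interval can separate them. In particular, whichever of $J_j, J_{j+1}$ is selected first has $J_j \cup J_{j+1}$ sitting inside the remaining unchosen region as a legitimate subinterval at the moment of that selection.

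Suppose first that $k' \ge k + 3$. Then $J_{k+1} \cup J_{k+2}$ lies strictly inside $[n, n']$, so $\summ(J_{k+1}) + \summ(J_{k+2}) \le g_n + \cdots + g_{n'} \le \frac{1}{2}$. Without loss of generality $J_{k+1}$ is selected before $J_{k+2}$. By the observation above, $J_{k+1} \cup J_{k+2}$ is a subinterval of the unchosen region at that moment, with sum at most $\frac{1}{2}$ and length $|J_{k+1}| + |J_{k+2}| > |J_{k+1}|$, contradicting the maximality of $J_{k+1}$.

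Now suppose $k' = k + 2$, and, for contradiction, that $k + 1$ is not sandwiched, so $J_{k+1}$ is selected before one of its neighbors---say $J_k$, the case of $J_{k+2}$ being symmetric. Consider the subinterval $[n, R(J_{k+1})]$. Its sum is at most $g_n + \cdots + g_{n'} \le \frac{1}{2}$, since $R(J_{k+1}) < L(J_{k+2}) \le n'$, and by the same adjacency observation it is a subinterval of the unchosen region at the moment $J_{k+1}$ is selected. Its length is $R(J_{k+1}) - n + 1 = |J_{k+1}| + (R(J_k) - n + 1) \ge |J_{k+1}| + 1$, which strictly exceeds $|J_{k+1}|$, again contradicting the maximality of $J_{k+1}$. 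The main delicacy in the proof is verifying in each case that the comparison interval genuinely survives all prior selections, but this is handled uniformly by the adjacency observation recorded in the first paragraph.
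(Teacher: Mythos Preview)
Your proof is correct and follows essentially the same approach as the paper: both arguments rule out $k'-k\ge 3$ by observing that $\summ(J_{k+1}\cup J_{k+2})\le\frac{1}{2}$ would contradict the maximality of whichever was chosen first, and both handle $k'-k=2$ by showing that if the middle interval was chosen before a neighbor, one could have extended it into that neighbor while keeping the sum at most $\frac{1}{2}$. The only cosmetic difference is that in the $k'-k=2$ case the paper extends the middle interval by a single element (e.g.\ $\{R(J_{k-1})\}\cup J_k$), whereas you extend all the way to $n$; your explicit ``adjacency observation'' makes the availability of the comparison interval slightly more transparent, but the content is the same.
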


\begin{proof}
First note that if $n \in J_k$ and $n' \in \cup_{\Delta \ge 3} J_{k+\Delta}$, then $g_n+\dots+g_{n'} > \frac{1}{2}$, since $\summ(J_{k+1}\cup J_{k+2}) > \frac{1}{2}$ (for otherwise whichever of $J_{k+1},J_{k+2}$ was chosen first would have ``engulfed" the other). Now suppose $n \in J_{k-1}$ and $n' \in J_{k+1}$ for some $k$. If $b_1(k-1) = k-1$, then $\summ(R(J_{k-1})\cup J_k) > \frac{1}{2}$, so $g_n+\dots+g_{n'} > \frac{1}{2}$. Similarly, if $b_1(k) = k+1$, then $\summ(J_k\cup L(J_{k+1})) > \frac{1}{2}$ also yields $g_n+\dots+g_{n'} > \frac{1}{2}$. Hence, $k$ is sandwiched. \end{proof}

\vs

We now proceed to argue that the PPC$(0,\frac{1}{2})$ contribution coming from cases (2) or (3) in Lemma \ref{trichotomy} is small. We begin with case (2).

\vs

We shall argue that the $\PPC(0,\frac{1}{2})$ contribution coming from adjacent subintervals $J_k,J_{k+1}$ is small by arguing that $|J_{b_1(k)}|$ is (usually) small. We do this by arguing that we would otherwise have too large of a contribution to $\PPC(\frac{1}{2},\frac{3}{2})$ coming from gaps $\lambda_{n+m}-\lambda_n$ with $m \ge 2$ (contradicting Proposition \ref{smallPPCmatleast2}). 

\vspace{1.5mm}

\begin{proposition}\label{smallbad1interval}
For any $k \in [s-1]$, one has $$\sum_{(n,n') \in J_k\times J_{k+1}} 1_{g_n+\dots+g_{n'} > \frac{1}{2}} \ge \frac{1}{2}|J_{b_1(k)}|^2.$$ 
\end{proposition}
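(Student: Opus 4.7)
My plan is to convert the sum constraint ``$g_n + \dots + g_{n'} \leq \tfrac{1}{2}$'' defining a ``bad'' pair into a length constraint by invoking the greedy maximality of the partition. Without loss of generality, I would assume $g_1(k) = k$, so that $J_k$ was chosen before $J_{k+1}$ in the construction (and in particular $|J_k| \geq |J_{k+1}|$); the case $g_1(k) = k+1$ is symmetric by reflecting left--right.

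The core observation is that at the moment $J_k$ was chosen, neither $J_k$ nor $J_{k+1}$ had yet been removed from $I$, so (since they are spatially adjacent) the entire block $J_k \cup J_{k+1}$ was still an available stretch of the complement of previously chosen intervals. Consequently, for any bad pair $(n, n') \in J_k \times J_{k+1}$, the sub-interval $[n, n']$ is contained in $J_k \cup J_{k+1}$, has sum at most $\tfrac{1}{2}$, and was therefore a legitimate candidate at that stage of the construction. By the greedy choice of $J_k$ as the \emph{largest} such candidate, we must have $|[n, n']| \leq |J_k|$.

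Writing $n = R(J_k) - i$ and $n' = L(J_{k+1}) + j$ with $i \in \{0, \dots, |J_k|-1\}$ and $j \in \{0, \dots, |J_{k+1}|-1\}$, the length $|[n, n']|$ equals $i + j + 2$, so the inequality above becomes the triangular constraint $i + j \leq |J_k| - 2$. A short count of such $(i, j)$ bounds the number of bad pairs by $|J_k| \cdot |J_{k+1}| - \tfrac{1}{2}|J_{k+1}|(|J_{k+1}|+1)$; subtracting from the total $|J_k| \cdot |J_{k+1}|$ pairs leaves at least $\tfrac{1}{2}|J_{k+1}|(|J_{k+1}|+1) \geq \tfrac{1}{2}|J_{b_1(k)}|^2$ good pairs, as desired. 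The only step requiring care, rather than routine computation, is the greedy-maximality argument in the previous paragraph, and in particular the fact that the union $J_k \cup J_{k+1}$ was uncut at the time $J_{g_1(k)}$ was chosen; once this is observed, the rest is a straightforward counting exercise.
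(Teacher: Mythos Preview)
Your argument is correct and follows essentially the same route as the paper: both reduce the sum constraint to the length constraint $|[n,n']|\le |J_{g_1(k)}|$ via greedy maximality (your observation that $J_k\cup J_{k+1}$ was uncut when $J_{g_1(k)}$ was chosen is exactly the point the paper uses), and both arrive at the count $\tfrac{1}{2}|J_{b_1(k)}|(|J_{b_1(k)}|+1)\ge \tfrac{1}{2}|J_{b_1(k)}|^2$. The only cosmetic difference is that the paper counts the ``good'' pairs directly, whereas you count the ``bad'' pairs and subtract.
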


\begin{proof}
Without loss of generality, by symmetry we may assume $b_1(k) = k+1$.

\vs

For $y \in J_{k+1}$ and $x \in J_k$, if $y-x+1 > |J_k|$, then $g_x+\dots+g_y > \frac{1}{2}$, since otherwise $[x,y]$ would have been chosen as $J_k$ (in the greedy process defining the partition) instead of $J_k$. Therefore, \begin{align*} \sum_{(n,n') \in J_k\times J_{k+1}} 1_{g_n+\dots+g_{n'} > \frac{1}{2}} &\ge \sum_{y=L(J_{k+1})}^{R(J_{k+1})} \sum_{x = L(J_k)}^{y-R(J_k)+L(J_k)-1} 1 \\ &= (\frac{L(J_{k+1})+R(J_{k+1})}{2})|J_{k+1}|-R(J_k)|J_{k+1}| \\ &= \frac{1}{2}|J_{k+1}|^2+\frac{1}{2}|J_{k+1}|, \end{align*} with the last equality using $R(J_k) = L(J_{k+1})-1$. 
\end{proof}

\vspace{1mm}

Next we proceed to bound the contribution from intervals $J_{k-1}$, $J_{k+1}$ for $k$ sandwiched. For such $k$, we argue that $|J_{b_2(k)}|$ is (usually) small.

\vspace{1.5mm}

\begin{proposition}\label{smallbad2interval}
For a sandwiched $k$, one has $$\sum_{(n,n') \in J_{k-1}\times J_{k+1}} 1_{g_n+\dots+g_{n'} > \frac{1}{2}} \ge \frac{1}{2}|J_{b_2(k)}|^2.$$ 
\end{proposition}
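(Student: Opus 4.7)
The plan is to mirror the proof of Proposition \ref{smallbad1interval}, exploiting the greedy property of the partition applied now to whichever of $J_{k-1}, J_{k+1}$ was chosen first. By symmetry, I will assume $g_2(k) = k-1$ and $b_2(k) = k+1$, so $J_{k-1}$ is chosen before $J_{k+1}$; both are chosen before $J_k$, since $k$ is sandwiched. Write $\gamma = |J_{k-1}|$, $\alpha = |J_k|$, $\beta = |J_{k+1}|$; then $\gamma \geq \beta$ by the greedy rule.

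The key observation is that at the moment $J_{k-1}$ is selected, none of $J_{k-1}, J_k, J_{k+1}$ has yet been chosen, so the contiguous block $[L(J_{k-1}), R(J_{k+1})]$ lies entirely inside the then-available region. Since $J_{k-1}$ is chosen as the largest subinterval of the available region with $\summ \leq \frac{1}{2}$, any subinterval of $[L(J_{k-1}), R(J_{k+1})]$ of length strictly greater than $\gamma$ must have $\summ > \frac{1}{2}$. In particular, every pair $(n, n') \in J_{k-1} \times J_{k+1}$ with $n' - n + 1 > \gamma$ contributes to the indicator sum in the statement.

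It remains to count such pairs and verify the count is at least $\frac{1}{2}\beta^2$. I will parametrize $n = L(J_{k-1}) + i$ with $i \in [0, \gamma-1]$ and $n' = L(J_{k+1}) + j$ with $j \in [0, \beta-1]$; the condition $n' - n + 1 > \gamma$ then reads $i \leq j + \alpha$, so the count equals $\sum_{j=0}^{\beta-1} \min(\gamma, j + \alpha + 1)$. The main (and only) obstacle is to verify that this sum is at least $\frac{1}{2}\beta^2$. If $\gamma - \alpha \geq \beta + 1$, then all the minima equal $j + \alpha + 1$ and the sum is $\frac{\beta(\beta+1)}{2} + \alpha\beta \geq \frac{\beta^2}{2}$. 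Otherwise, setting $u = \gamma - \alpha \in [0, \beta]$, a direct computation shows the sum equals $\frac{1}{2}(2\gamma\beta - u^2 + u)$, and the desired bound reduces to $u^2 - u \leq \beta(2\gamma - \beta)$; this follows from $u \leq \beta$ (giving $u^2 - u \leq \beta^2 - \beta$) combined with $\gamma \geq \beta$ (giving $\beta(2\gamma - \beta) \geq \beta^2$).
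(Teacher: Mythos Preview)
Your proof is correct and follows essentially the same approach as the paper: the key observation that any subinterval of $J_{k-1}\cup J_k\cup J_{k+1}$ of length exceeding $|J_{k-1}|$ must have $\summ>\tfrac12$ (by the greedy choice of $J_{k-1}$, using that $J_k,J_{k+1}$ were not yet selected) is identical, and the ensuing two-case count is the same split as the paper's, with your $(i,j)$-parametrization giving a somewhat cleaner computation. One small point worth making explicit: your claim $u=\gamma-\alpha\ge 0$ uses $\gamma\ge\alpha$, which follows from the fact you noted that $J_{k-1}$ is chosen before $J_k$ (since $k$ is sandwiched).
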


\begin{proof}
Without loss of generality, by symmetry we may assume $b_2(k) = k+1$.

\vspace{1.5mm}

For $y \in J_{k+1}$ and $x \in J_{k-1}$, if $y-x+1 > |J_{k-1}|$, then $g_x+\dots+g_y > \frac{1}{2}$, since otherwise $[x,y]$ would have been chosen instead of $J_{k-1}$ (recall $J_k$ was also chosen after $J_{k-1}$, since $k$ is sandwiched). Therefore, $$\sum_{(n,n') \in J_{k-1}\times J_{k+1}} 1_{g_n+\dots+g_{n'} > \frac{1}{2}} \ge \sum_{y=L(J_{k+1})}^{R(J_{k+1})} \sum_{x = L(J_{k-1})}^{\min(R(J_{k-1}),y-|J_{k-1}|)} 1.$$ If $R(J_{k+1})-|J_{k-1}| \le R(J_{k-1})$, then we obtain \begin{align*} \sum_{(n,n') \in J_{k-1}\times J_{k+1}} 1_{g_n+\dots+g_{n'} > \frac{1}{2}} &\ge \sum_{y = L(J_{k+1})}^{R(J_{k+1})} \sum_{x = L(J_{k-1})}^{y-|J_{k-1}|} 1 \\ &= |J_{k+1}|\left(\frac{R(J_{k+1})+L(J_{k+1})}{2}-|J_{k-1}|-L(J_{k-1})+1\right) \\ &= |J_{k+1}|\left(\frac{R(J_{k+1})+L(J_{k+1})}{2}-R(J_{k-1})\right) \\ &= |J_{k+1}|\left(\frac{R(J_{k+1})+L(J_{k+1})}{2}-L(J_{k+1})+|J_k|+1\right) \\ &= |J_{k+1}|\left(\frac{|J_{k+1}|-1}{2}+|J_k|+1\right), \end{align*} and conclude by observing that $|J_k| \ge 0$ and $\frac{|J_{k+1}|-1}{2}+1 \ge \frac{1}{2}|J_{k+1}|$.

\vspace{2mm}

\noindent If, instead, $R(J_{k+1})-|J_{k-1}| \ge R(J_{k-1})$, we obtain \begin{equation}\label{obtain} \sum_{(n,n') \in J_{k-1}\times J_{k+1}} 1_{g_n+\dots+g_{n'} > \frac{1}{2}} \ge \sum_{y=L(J_{k+1})}^{|J_{k-1}|+R(J_{k-1})} \sum_{x=L(J_{k-1})}^{y-|J_{k-1}|} 1 +\sum_{y=|J_{k-1}|+R(J_{k-1})+1}^{R(J_{k+1})} \sum_{x=L(J_{k-1})}^{R(J_{k-1})} 1.\end{equation} The first double sum on the RHS of \eqref{obtain} is equal to $$(|J_{k-1}|+R(J_{k-1})-L(J_{k+1})+1)\left(\frac{|J_{k-1}|+R(J_{k-1})+L(J_{k+1})}{2}-R(J_{k-1})\right),$$ while the second double sum is equal to $$\Big(R(J_{k+1})-|J_{k-1}|-R(J_{k-1})\Big)|J_{k-1}|.$$ Adding these two sums and simplifying we obtain, 
$$(|J_{k-1}| - |J_k|)(\frac{|J_{k-1}| + |J_k| + 1}{2}) + |J_{k-1}|(|J_k| + |J_{k+1}|-|J_{k-1}|)\qquad \qquad \qquad$$ $$\qquad \qquad \qquad \qquad \qquad \qquad  = -\frac{1}{2}(|J_{k-1}|-|J_k|)^2 + \frac{1}{2}(|J_{k-1}| - |J_k|) + |J_{k-1}||J_{k+1}|.$$
Now recall that we have $|J_k| \leq |J_{k+1}| \leq |J_{k-1}|$ and furthermore, by our assumption in the second case, we have $|J_{k-1}| \leq |J_k| + |J_{k+1}|$. Thus we may obtain 
$$ -\frac{1}{2}(|J_{k-1}|-|J_k|)^2 + \frac{1}{2}(|J_{k-1}| - |J_k|) + |J_{k-1}||J_{k+1}| \geq -\frac{1}{2}|J_{k+1}|^2 + 0 + |J_{k+1}|^2 = \frac{1}{2}|J_{k+1}|^2.$$
This completes the proof. 
\end{proof}

\vspace{1mm}

\vspace{1mm}

\vs

We now cease referring to a specific $I \in \mc{I}^N$. To denote dependence on $I \in \mc{I}^N$, we denote $I = \sqcup_{k=1}^r J_k^I$ its decomposition. 

\vspace{1.5mm}

\begin{proposition}\label{PPCcrosstermbounds}
For any $a \in (0,\frac{1}{2})$, it holds that $$\sum_{I \in \mc{I}^N} \sum_k \PPC^{J_k^I,J_{k+1}^I}(0,a) \le 2\sqrt{2}\epsilon^{1/4}\left(\sum_{I \in \mc{I}^N} \sum_k |J_k^I|^2\right)^{1/2}\sqrt{N}$$ and $$\sum_{I \in \mc{I}^N} \sum_{k {\normalfont \text{ sandwiched}}} \PPC^{J_{k-1}^I,J_{k+1}^I}(0,a) \le 2\sqrt{2}\epsilon^{1/4}\left(\sum_{I \in \mc{I}^N} \sum_k |J_k^I|^2\right)^{1/2}\sqrt{N}.$$
\end{proposition}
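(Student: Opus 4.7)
The plan is to bootstrap from Propositions \ref{smallbad1interval} and \ref{smallbad2interval} via Cauchy--Schwarz. Those two propositions supply lower bounds, of the form $\tfrac12 |J_{b_1(k)}^I|^2$ (resp.\ $\tfrac12|J_{b_2(k)}^I|^2$), on the number of ``bad'' pairs $(n,n') \in J_k^I \times J_{k+1}^I$ (resp.\ $J_{k-1}^I \times J_{k+1}^I$) whose gap sum exceeds $\tfrac12$. Since such pairs contribute to the $\PPC(\tfrac12,\tfrac32+\epsilon)$ count for windows of size at least $2$, Proposition \ref{smallPPCmatleast2} will cap the total bad contribution at $O(\sqrt{\epsilon} N)$.

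Concretely, I would first sum the conclusion of Proposition \ref{smallbad1interval} over all $I \in \mc{I}^N$ and $k \in [r_I - 1]$. The products $J_k^I \times J_{k+1}^I$ are pairwise disjoint subsets of $[N]^2$ (the $I$'s are disjoint and the $J_k^I$'s partition $I$), and each counted pair $(n,n')$ satisfies $n' > n$ and $g_n + \dots + g_{n'} \le \summ(J_k^I) + \summ(J_{k+1}^I) \le 1 < \tfrac32 + \epsilon$. Hence Proposition \ref{smallPPCmatleast2} will yield, for $N$ large,
$$\sum_{I \in \mc{I}^N} \sum_k |J_{b_1(k)}^I|^2 \le 4\sqrt{\epsilon}\, N.$$
The analogue for sandwiched $k$ and $|J_{b_2(k)}^I|^2$ would follow identically, using instead $g_n + \dots + g_{n'} \le \summ(J_{k-1}^I) + \summ(J_k^I) + \summ(J_{k+1}^I) \le \tfrac{3}{2}$.

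For the first claimed inequality I would then bound trivially $\PPC^{J_k^I, J_{k+1}^I}(0,a) \le |J_k^I| \cdot |J_{k+1}^I| = |J_{b_1(k)}^I| \cdot |J_{g_1(k)}^I|$ and apply Cauchy--Schwarz over the pairs $(I, k)$, giving
$$\sum_{I,k}\PPC^{J_k^I, J_{k+1}^I}(0,a) \le \Big(\sum_{I,k}|J_{b_1(k)}^I|^2\Big)^{1/2} \Big(\sum_{I,k}|J_{g_1(k)}^I|^2\Big)^{1/2}.$$
The elementary observation that each index $k'$ appears as $g_1(k)$ for at most two values of $k$ (namely $k = k'-1$ and $k = k'$) gives $\sum_{I,k}|J_{g_1(k)}^I|^2 \le 2\sum_{I,k}|J_k^I|^2$, and plugging in the bound on the $b_1$-sum produces exactly $2\sqrt{2}\,\epsilon^{1/4}\sqrt{N}\big(\sum_{I,k}|J_k^I|^2\big)^{1/2}$. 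The second inequality would run identically, using Proposition \ref{smallbad2interval} and the fact that each $k'$ again appears as $g_2(k)$ for at most two sandwiched $k$ (namely $k = k'-1$ and $k = k'+1$).

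I don't anticipate a real obstacle; the only design choice worth flagging is to factor $|J_k||J_{k+1}| = |J_{b_1(k)}||J_{g_1(k)}|$ before applying Cauchy--Schwarz, so that the step pairs the well-controlled small factor against a factor we only bound crudely via the double-cover count. Everything else is a routine verification of disjointness and of the sum-of-gaps estimates needed to land inside the window $(\tfrac12, \tfrac32 + \epsilon)$ of Proposition \ref{smallPPCmatleast2}.
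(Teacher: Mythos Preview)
Your proposal is correct and follows essentially the same approach as the paper: the trivial bound $\PPC^{J_k,J_{k+1}}(0,a)\le |J_{g_1(k)}||J_{b_1(k)}|$, Cauchy--Schwarz, the double-cover bound $\sum_{I,k}|J_{g_1(k)}|^2\le 2\sum_{I,k}|J_k|^2$, and the control of $\sum_{I,k}|J_{b_1(k)}|^2$ via Propositions \ref{smallbad1interval} and \ref{smallPPCmatleast2} (with the obvious analogues for the sandwiched case). The only cosmetic difference is the order of presentation---you first isolate the bound $\sum_{I,k}|J_{b_1(k)}|^2\le 4\sqrt{\epsilon}N$ and then feed it into Cauchy--Schwarz, whereas the paper applies Cauchy--Schwarz first and then bounds each factor.
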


\begin{proof}
By trivially bounding $\PPC^{J_k^I,J_{k+1}^I}(0,a) \le |J_{g_1(k)}^I|\hs |J_{b_1(k)}^I|$ and Cauchy-Schwarz, \begin{align*} \sum_{I \in \mc{I}^N} \sum_k \PPC^{J_k^I,J_{k+1}^I}(0,a) &\le \sum_{I \in \mc{I}^N} \sum_k |J_{g_1(k)}^I|\hs |J_{b_1(k)}^I| \\ &\le \left(\sum_{I,k} |J_{g_1(k)}^I|^2\right)^{1/2}\left(\sum_{I,k} |J_{b_1(k)}^I|^2\right)^{1/2} \\ &\le \left(2\sum_{I \in \mc{I}^N} \sum_k |J_k^I|^2\right)^{1/2}\left(\sum_{I \in \mc{I}^N} \sum_k 2 \PPC^{J_k^I,J_{k+1}^I}(\frac{1}{2},\frac{3}{2})\right)^{1/2},\end{align*} where the last inequality used Proposition \ref{smallbad1interval} together with the fact that $\summ(J_k^I \cup J_{k+1}^I) \le 1 \le \frac{3}{2}$. Now just observe $$\sum_{I \in \mc{I}^N} \sum_k 2 \PPC^{J_k^I,J_{k+1}^I}(\frac{1}{2},\frac{3}{2}) \le 2\sum_{n \le N} \sum_{2 \le m \le N-n+1} 1_{g_n+\dots+g_{n+m-1} \in (\frac{1}{2},\frac{3}{2})},$$ which, by Proposition \ref{smallPPCmatleast2}, is at most $4\sqrt{\epsilon} N$. The first inequality of the lemma follows. 

\vs

For the second inequality of the lemma, we argue as above, except this time using Proposition \ref{smallbad2interval}: \begin{align*} &\sum_{I \in \mc{I}^N} \sum_{k \text{ sandwiched}} \PPC^{J_{k-1}^I,J_{k+1}^I}(0,a)\\ &\qquad \qquad \qquad \le \sum_{I \in \mc{I}^N} \sum_{k \text{ sandwiched}} |J_{g_2(k)}^I|\hs |J_{b_2(k)}^I| \\ &\qquad \qquad \qquad \le \left(\sum_{I \in \mc{I}^N} \sum_{k \text{ sandwiched}} |J_{g_2(k)}^I|^2\right)^{1/2}\left(\sum_{I \in \mc{I}^N} \sum_{k \text{ sandwiched}} |J_{b_2(k)}^I|^2\right)^{1/2} \\ &\qquad \qquad \qquad \le \left(2\sum_{I \in \mc{I}^N} \sum_k |J_k^I|^2\right)^{1/2}\left(\sum_{I \in \mc{I}^N} \sum_k 2 \PPC^{J_{k-1}^I,J_{k+1}^I}(\frac{1}{2},\frac{3}{2})\right)^{1/2},\end{align*} where the last inequality used $\summ(J_{k-1}\cup J_k\cup J_{k+1}) \le \frac{3}{2}$. Now just observe $$\sum_{I \in \mc{I}^N} \sum_k 2 \PPC^{J_k^I,J_{k+1}^I}(\frac{1}{2},\frac{3}{2}) \le 2\sum_{n \le N} \sum_{2 \le m \le N-n+1} 1_{g_n+\dots+g_{n+m-1} \in (\frac{1}{2},\frac{3}{2})},$$ which, by Proposition \ref{smallPPCmatleast2}, is at most $4\sqrt{\epsilon} N$. The second inequality of the lemma follows.
\end{proof}

\vspace{1.5mm}

We are ready to complete the proof of Proposition \ref{partition}.

\begin{proposition}\label{sizeofintervalsbounds} 
For all $N$ large, $$\sum_{I \in \mc{I}^N}\sum_k \binom{|J_k^I|+1}{2} \geq  (\frac{1}{2}-4\sqrt{2}\epsilon^{1/4})N.$$
\end{proposition}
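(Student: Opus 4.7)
The plan is to evaluate $\PPC^{[N]}(0,\tfrac{1}{2})$ in two ways—globally as $(\tfrac{1}{2}+o(1))N$ and locally via the decomposition into blocks $J_k^I$—and then use Proposition~\ref{PPCcrosstermbounds} to argue that the cross contributions are negligible, which forces the within-block count (essentially $S := \sum_{I,k}\binom{|J_k^I|+1}{2}$) to be close to $\tfrac{1}{2}N$.

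First I will apply Lemma~\ref{trichotomy}: for each $I\in\mc{I}^N$, every pair $(n,n')\in I\times I$ with $g_n+\cdots+g_{n'} \le \tfrac{1}{2}$ lies either (i) in a single block $J_k^I$, (ii) across two adjacent blocks $J_k^I,J_{k+1}^I$, or (iii) across $J_{k-1}^I,J_{k+1}^I$ for a sandwiched $k$. Bounding the diagonal contribution trivially by $\binom{|J_k^I|+1}{2}$ and summing over $I$ gives
$$\PPC^{[N]}(0,\tfrac{1}{2}) \;\le\; S \;+\; \sum_{I,k}\PPC^{J_k^I,J_{k+1}^I}(0,\tfrac{1}{2}) \;+\; \sum_{I,\,k\text{ sand.}}\PPC^{J_{k-1}^I,J_{k+1}^I}(0,\tfrac{1}{2}),$$
where the equality $\sum_I\PPC^I(0,\tfrac{1}{2}) = \PPC^{[N]}(0,\tfrac{1}{2}) = (\tfrac{1}{2}+o(1))N$ comes from the maximality argument already appearing in the proof of Lemma~\ref{Ibounds}.

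Next I will invoke Proposition~\ref{PPCcrosstermbounds} on both cross-sums, and use the elementary $m^2\le m(m+1)=2\binom{m+1}{2}$ to bound $\sum_{I,k}|J_k^I|^2 \le 2S$. This yields
$$(\tfrac{1}{2}+o(1))N \;\le\; S + 4\sqrt{2}\,\epsilon^{1/4}\sqrt{2S}\cdot\sqrt{N} \;=\; S + 8\epsilon^{1/4}\sqrt{SN}.$$
Setting $x = S/N$, the conclusion $x\ge \tfrac{1}{2}-4\sqrt{2}\epsilon^{1/4}$ follows by contradiction: if $x<\tfrac{1}{2}-4\sqrt{2}\epsilon^{1/4}$ then $x<\tfrac{1}{2}$, so $\sqrt{x}<1/\sqrt{2}$, and hence $x + 8\epsilon^{1/4}\sqrt{x} < \tfrac{1}{2}-4\sqrt{2}\epsilon^{1/4} + 4\sqrt{2}\epsilon^{1/4} = \tfrac{1}{2}$, contradicting the inequality above for $N$ large.

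The main obstacle is not a conceptual one but rather keeping track of the exact constant $4\sqrt{2}$. Two $\sqrt{2}$ factors appear: one from the Cauchy-Schwarz-type bound $\sum|J_k^I|^2\le 2S$, and one from the fact that solving $\tfrac{1}{2}\le x + c\sqrt{x}$ to leading order gives $x\ge \tfrac{1}{2}-c/\sqrt{2}$. Both must be tracked carefully so that the constant comes out precisely to match the hypothesis of Proposition~\ref{partition}. Apart from that, the argument is a direct assembly of Lemma~\ref{Ibounds}, Lemma~\ref{trichotomy}, and Proposition~\ref{PPCcrosstermbounds}.
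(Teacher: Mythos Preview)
Your proposal is correct and follows essentially the same route as the paper: decompose $\PPC^{[N]}(0,\tfrac12)$ via Lemma~\ref{trichotomy}, bound the two cross-sums using Proposition~\ref{PPCcrosstermbounds}, convert $\sum|J_k^I|^2$ to $2S$ via $m^2\le 2\binom{m+1}{2}$, and solve the resulting inequality. The only cosmetic difference is that the paper writes $S=(\tfrac12-\delta)N$ and bounds $\delta$ directly, whereas you argue by contradiction on $x=S/N$; these are equivalent, and both share the same mild informality about the $o(1)$ term in the final step.
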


\begin{proof}
Using Lemma \ref{trichotomy} we have that 
$$(\frac{1}{2}+o(1))N = \sum_{I\in \mc I^N}\PPC^I(0,\frac{1}{2}) = \sum_I\sum_{k=1}^{r_I} \PPC^{J^I_k}(0,\frac{1}{2}) +\sum_{k=1}^{r_I-1}\PPC^{J^I_k,J^I_{k+1}}(0, \frac{1}{2})$$ $$\qquad \qquad \qquad \qquad \qquad \qquad \qquad \qquad \qquad \qquad \qquad + \sum_{k \text{ sandwiched}}\PPC^{J_{k-1},J_{k+1}}(0,\frac{1}{2}).$$ 

Invoking Proposition \ref{PPCcrosstermbounds} in the first line and Lemma \ref{Ibounds} in the second we have: \begin{align*}(\frac{1}{2}+o(1))N-\sum_{I \in \mc{I}^N}\sum_k \binom{|J_k^I|+1}{2} &\le 4\sqrt 2 \epsilon^{1/4}\left(\sum_{I \in \mc{I}^N} \sum_k |J_k^I|^2\right)^{1/2}\sqrt{N} \\
& \le  8\epsilon^{1/4}\left(\sum_{I \in \mc{I}^N} \sum_k \binom{|J_k^I|+1}{2}\right)^{1/2}\sqrt{N}.
\end{align*}

Writing $$\sum_{I \in \mc{I}^N} \sum_k \binom{|J_k^I|+1}{2} = (\frac{1}{2}-\delta) N,$$ we see $$\delta+o(1)\le 8 \epsilon^{1/4}(\frac{1}{2} - \delta)^{1/2},$$ which yields (after a little computation) $\delta \le 4\sqrt{2}\epsilon^{1/4}$ for $N$ large enough.
\end{proof}

\vspace{1.5mm}

\vsss

\section{Appendix: Proof of Lemma \ref{5/12lemma}}\label{appendix}

We restate Lemma \ref{5/12lemma} for the reader's convenience.

\vspace{1mm}

\begin{lemma-repeat}
For positive integers $a,b,c,L$ satisfying $1 \le a \le b \le c \le L$, we have  $$(a-1)a+(b-a)(b-a+1)+(c-b)(c-b+1)+(L-c)(L-c+1) \qquad \qquad$$ $$\qquad \qquad \qquad \qquad +(a-1)(b-a)+(b-a)(c-b)+(c-b)(L-c) \ge \frac{5}{12}L^2+\frac{1}{6}L-\frac{7}{12}.$$
\end{lemma-repeat}

\begin{proof}
Fix $L \ge 1$. It clearly suffices to prove the inequality for all real numbers $a,b,c$ satisfying $1 \le a \le b \le c \le L$. By compactness, we may work with a triple $(a,b,c)$ that achieves the minimum value of the left hand side (which we denote $\LHS$) minus the right hand side (which we denote $\RHS$). We divide into three cases. 

\vs

\tb{Case 1}: $c=L$. 

\vspace{1.5mm}

As one may compute, $\frac{\partial}{\partial c}[\LHS-\RHS] = -a+2c-L = L-a$. 

\vspace{1.5mm}

\textit{Subcase 1}: $a=L$. Then $b=L$, which gives $\LHS-\RHS = \frac{28L^2-51L+28}{48}$, which is non-negative, since it is equal to $5$ at $L=1$ and has derivative $56L-51$, which is positive for $L \ge 1$.

\vspace{1.5mm}

\textit{Subcase 2}: $a \not = L$. Then $\frac{\partial}{\partial c}[\LHS-\RHS] > 0$, so since $(a,b,c)$ is a minimizer, we must have $b=c$, for otherwise we can decrease $c$ a bit to decrease $\LHS-\RHS$. Thus, $\LHS-\RHS = a^2-(L+1)a+\frac{28L^2-3L+28}{48}$, which has minimum occurring at $a = \frac{L+1}{2}$, which gives $\LHS-\RHS = \frac{1}{3}L^2-\frac{9}{16}L+\frac{1}{3}$, which is always non-negative, since it is at $L=1$ and the derivative is $\frac{2}{3}L-\frac{9}{16}$, which is non-negative for $L \ge 1$.

\vsss

\tb{Case 2}: $c \not = L$ and $b = c$.

\vspace{1.5mm}

Since $c \not = L$ and $(a,b,c)$ is a minimizer, we must have $\frac{\partial}{\partial c}[\LHS-\RHS] \ge 0$, for otherwise we could increase $b,c$ a bit to decrease $\LHS-\RHS$. Recall $\frac{\partial}{\partial c}[\LHS-\RHS] = -a+2c-L$; so, $2c-L \ge a$.

\vspace{1.5mm}

\textit{Subcase 1}: $a=b$. In this case, $\LHS-\RHS = 2c^2-(2L+2)c+\frac{7L^2+10L+7}{12}$, which has minimum at $c = \frac{L+1}{2}$, which yields $\LHS-\RHS = \frac{(L-1)^2}{12}$, which is non-negative.

\vspace{1.5mm}

\textit{Subcase 2}: $a \not = b$. In this case, we must have $\frac{\partial}{\partial b}[\LHS-\RHS] \le 0$, since otherwise we could decrease $b$ a bit to decrease $\LHS-\RHS$. Note $\frac{\partial}{\partial b}[\LHS-\RHS] = -1+2b-L$, so since $2b-L \ge a$, we must have $a=1$ and $2b-L = 1$. So, we have $a=1,b=\frac{L+1}{2},c=\frac{L+1}{2}$, which indeed has $\LHS-\RHS \ge 0$.

\vsss

\tb{Case 3}: $c \not = L$ and $b \not = c$. 

\vspace{1.5mm}

In this case, $\frac{\partial}{\partial c}[\LHS-\RHS]$ must be $0$, for otherwise we could perturb $c$ a bit to decrease $\LHS-\RHS$. So, $-a+2c-L = 0$. 

\vspace{1.5mm}

\textit{Subcase 1}: $a = 1$. Having $a=1$ and $-a+2c-L=0$, i.e., $c = \frac{L+1}{2}$, yields $\LHS-\RHS = \frac{1}{4}[4b^2-4(L+1)b+3L^2+2L-1]$, which is minimized as $b = \frac{L+1}{2}$, which was dealt with in Subcase 2 of Case 2.

\vspace{1.5mm}

\textit{Subcase 2}: $a = b$. Then, $48(\LHS-\RHS) = 84b^2-(72L+96)b+16L^2+45L+28$, which has minimum at $b = \frac{72L+96}{84}$, which gives $48(\LHS-\RHS) = 16L^2+45L+28$, which is clearly non-negative.

\vspace{1.5mm}

\textit{Subcase 3}: $a \not \in \{1,b\}$. Then $\frac{\partial}{\partial a}[\LHS-\RHS] = 0$, for otherwise we could perturb $a$ a bit to decrease $\LHS-\RHS$. So, $-1+2a-c = 0$. Together with $-a+2c-L = 0$ yields $a = \frac{L+2}{3}, c = \frac{2L+1}{3}$, which yields $\LHS-\RHS = b^2-(L+1)b+\frac{12L^2+29L+12}{48}$, which is minimized at $b = \frac{L+1}{2}$, which yields $\frac{5L}{48}$, which is clearly non-negative. 
\end{proof}

\vs

\section{Acknowledgments}\label{acknowledgments}

We would like to thank our advisor Ben Green for suggesting this problem to us.

\vs


\begin{thebibliography}{10}

\bibitem{abr} C. Aistleitner, V. Blomer, M. Radziwi\l\l. Triple correlation and long gaps in the spectrum of flat tori. ArXiv e-prints, September 2018, 1809.07881.

\bibitem{all} C. Aistleitner, G. Larcher, M. Lewko. Additive Energy and the Hausdorff dimension of the exceptional set in metric pair correlation problems, with an Appendix by Jean Bourgain. Israel J. Math. 222 (1) (2017) 463-485. 

\bibitem{alp} C. Aistleitner, T. Lachmann, F. Pausinger. Pair correlations and equidistribution, J. Number Theory 182, (2018) 206–220.

\bibitem{emm} A. Eskin, G. Margulis, S. Mozes. Quadratic forms of signature (2, 2) and eigenvalue spacings on rectangular 2-tori. Ann. of Math. (2) 161 (2005), 679-725.

\bibitem{green} B. Green, personal communication.

\bibitem{heath-brown} D. R. Heath-Brown. Pair correlation for fractional parts of $\alpha n^2$, Math. Proc. Cambridge Philos. Soc. 148 (2010), 385–407.

\bibitem{udbook} L. Kuipers, H. Niederreiter. Uniform distribution of sequences, Mineola, NY, Dover Publications (2006).‌

\bibitem{rudnicksarnak} Z. Rudnick, P. Sarnak. The pair correlation function of fractional parts of polynomials, Comm. Math. Phys. 194 (1998), 61–70.

\bibitem{rz1999} Z. Rudnick, A. Zaharescu. A metric result on the pair correlation of fractional parts of sequences, Acta. Arith. 89 (1999), 283–293.

\bibitem{rz2002} Z. Rudnick, A. Zaharescu. The distribution of spacings between fractional parts of lacunary sequences, Forum Math. 14 (2002), 691–712.

\bibitem{sarnak} P. Sarnak, Values at integers of binary quadratic forms. Harmonic analysis and number
theory (Montreal 1996), 181-203, CMS Conf. Proc. 21, Amer. Math. Soc., Providence, RI, 1997.

\bibitem{steinerberger} S. Steinerberger, Poissonian pair correlation and discrepancy, Indag. Math. 29 (2018) 1167-1178.

\bibitem{walker} A. Walker. The Primes are not metric Poissonian, Mathematika 64 (2018) 230-236.

\end{thebibliography}
\end{document}